\newtheorem{theorem}{Theorem}
\newtheorem{example}[theorem]{Example}
\newtheorem{lemma}[theorem]{Lemma}
\newtheorem{proposition}[theorem]{Proposition}
\newtheorem{remark}[theorem]{Remark}
\newenvironment{proof}[1][Proof]{\noindent\textbf{#1.} }{\ \rule{0.5em}{0.5em}}
\begin{document}

\title{\textbf{Metrizability of $SO\left( 3\right) $-invariant connections:\
Riemann versus Finsler}}
\author{\textbf{\ Nicoleta Voicu}$^{\mathbf{1}}$\textbf{{\ and Salah Gomaa
Elgendi$^{2,3}$ }} \\
$^{1}${\small Department of Mathematics and Computer Science,}\\
{\small Transilvania University, Brasov, Romania; }\\
{$^{2}$}{\small Department of Mathematics, Faculty of Science,}\\
{\small Islamic University of Madinah, Kingdom of Saudi Arabia }\\
{$^{3}$}{\small Department of Mathematics, Faculty of Science,}\\
{\small Benha University, Egypt,}\\
{\small e-mails: nico.voicu@unitbv.ro, salah.ali@fsc.bu.edu.eg}}
\date{}
\maketitle

\begin{abstract}
For a torsion-free affine connection on a given manifold, which does not
necessarily arise as the Levi-Civita connection of any pseudo-Riemannian
metric, it is still possible that it corresponds in a canonical way to a
Finsler structure; this property is known as Finsler (or Berwald-Finsler)
metrizability.

In the present paper, we clarify, for 4-dimensional $SO(3)$-invariant,
Berwald-Finsler metrizable connections, the issue of the existence of an
affinely equivalent pseudo-Riemannian structure. In particular, we find all
classes of \ $SO\left( 3\right) $-invariant connections which are not
Levi-Civita connections for any pseudo-Riemannian metric - hence, are
non-metric in a conventional way - but can still be metrized by $SO(3)$%
-invariant Finsler functions. The implications for physics, together with
some examples are briefly discussed.
\end{abstract}

\section{Introduction}

In the search of extensions of general relativity that may explain
geometrically the issues arising at either the largest scales (e.g., the
accelerated expansion of the universe) or at the smallest scales (tensions
with quantum mechanics), one needs to relax, in a way or another, the
"Riemannian constraint" upon the geometry underlying our spacetime models. A
most popular and well-studied choice is represented by metric-affine
geometry, based on a Riemannian arc length and an independent connection, 
\cite{Blagojevic:2012}, \cite{Hehl:1976}, \cite{Hehl:1994}, \cite{CANTATA}.

Another route is to relax our very definition of the spacetime interval.
This is precisely done by \textit{Finsler geometry}, which is the geometry
of a manifold equipped with a most general notion of
parametrization-invariant notion of arc length -- not necessarily arising
from a pseudo-scalar product of tangent vectors, but just, roughly speaking,
from a (pseudo)-norm -- and has been so far proven to be an excellent
mathematical framework for modified dispersion relations appearing in
quantum gravity phenomenology \cite{Amelino-Camelia}, \cite{Girelli}, \cite%
{Pfeifer-Finsler-physics}, \cite{Raetzel}, for theories exhibiting
broken/deformed Lorentz invariance, \cite{Bogoslovsky-Goenner}, \cite%
{Edwards}, \cite{Foster}, \cite{Gibbons}, \cite{Letizia}, for the
description of the gravitational field of kinetic gases \cite{kinetic gas}, 
\cite{kinetic gas essay}, \cite{math-foundations}, or of propagation of
fields in media, \cite{Markvorssen}, \cite{Perlick}, \cite{Yajima}.

In Finslerian models of gravity, arc length is physically interpreted as
proper time, whereas free fall trajectories are modeled as Finslerian
geodesics. The latter are obtained in a largely similar manner to Riemannian
geometry; yet, the natural and canonical generalization of the Levi-Civita
connection, determined by the Finslerian geodesic equation, is generally not
a linear (affine) connection on spacetime, but just a \textit{nonlinear }one.

Finsler spaces for which this canonical connection is an affine one, are
known as \textit{Berwald spaces. }These are, in a sense, the closest Finsler
spaces to pseudo-Riemannian ones. Actually, one of the first questions that
arise, for a Berwald space, is to what extent is its geodesic structure
different from a Riemannian one - more precisely, if its canonical affine
connection (which is, by definition, symmetric) is actually not the
Levi-Civita connection of some pseudo-Riemannian metric. Depending on the
answer, there exist two possible scenarios:

\begin{enumerate}
\item If the connection is also Riemann metrizable, then spacetime geodesics
are equally well described by the pseudo-Riemannian and by the Finslerian
metric; the difference between the predictions of the two metric models is
identifiable in the form of \textit{time delays }due to a different
measurement of proper time. This is, for instance, trivially the case in
deformed special relativity, where geodesics are straight lines, arising
both as geodesics of the Minkowski metric and of a flat Finslerian metric.

\item If the connection is not Riemann metrizable, Finslerian geodesics are
affine geodesics that cannot be re-assigned to any\ pseudo-Riemannian
metric. Since basically in all gravitational applications, a fiducial
Riemannian metric (e.g., Schwarzschild, or
Friedmann-Lemaitre-Robertson-Walker, depending on the problem at hand) is
still available, such Berwald-Finsler models can be also interpreted as
metric-affine ones with nonmetricity; their distinctive feature is that
their affine geodesics are \textit{variational}, as explained below.
\end{enumerate}

In general metric-affine theories with nonmetricity, metric geodesics
("shortest paths") and affine geodesics ("straightest paths") do not
coincide; moreover, the latter are typically not variational, that is, they
do not arise as Euler-Lagrange equations of any point particle Lagrangian.
The Berwald metrizability property, if present, ensures precisely that
affine geodesics \textit{do }arise, in fact, as extremals of an arc length
Lagrangian - just, not a Riemannian, but a Finslerian one.

\bigskip

The existence of affine connections that are metrizable by Lorentzian
signature Finsler metrics, but not pseudo-Riemann metrizable, first proven
by a concrete example in \cite{Berwald-non-metrizability}, is actually,
quite surprising; classical (smooth, positive) definite Berwald-Finsler
structures had been for long known by a famous theorem by Szab\'{o}, \cite%
{Szabo} to always be Riemannian metrizable. A legitimate question is, thus,
how many such connections do actually exist.

\bigskip

In the present paper, we give a complete answer to the above question, for
the class of spherically symmetric Berwald-Finsler structures on subsets of $%
\mathbb{R}^{4}$.

To this aim, we first find, in Theorem \ref{thm:main}, the necessary and
sufficient conditions for the pseudo-Riemann metrizability of a given
non-Riemannian Berwald-Finsler structure: the symmetry of the connection
Ricci tensor (which was known, \cite{Berwald-non-metrizability}, \cite%
{Heefer-thesis}, \cite{Heefer-m-Kropina}, to be necessary), together with a
non-maximal dimension of the vertical part of the corresponding holonomy
algebra. These two conditions are independent, as proven by concrete
examples.

Then, using the above result and a previous local classification of $%
SO\left( 3\right) $-invariant, non-Riemannian Berwald structures, \cite%
{Voicu}, we are able to determine, for each of the five existing classes,
its Riemann metrizability and, in the affirmative case, to concretely
construct an affinely equivalent Riemannian metric.

We find that two of these five classes - for which we provide concrete
examples - are never Riemann metrizable.

\bigskip

The paper is structured as follows. In Section \ref{sec:preliminaries}, we
briefly present the known notions and results to be used subsequently; then,
in Section \ref{sec:main}, we state our main result and give an outline of
its proof; Section 4 presents the details of this proof, together with some
concrete examples. An analysis of the obtained results, as well as the
remaining open questions and plans for future work are presented in the
Conclusion. Finally, some technical results in \cite{Voicu}, which are used
in the proof, are listed in the Appendix. For calculations in the examples,
we use Maple software or the Finsler package \cite{NF-Package}.

\section{Preliminaries \label{sec:preliminaries}}

This section briefly reviews the basic notions of (pseudo-)Finsler geometry
and, in particular, of Berwald-type geometry, needed in the following.

Consider a smooth manifold $M$, of arbitrary (for the moment) dimension $n.$
On its tangent bundle $\left( TM,\pi ,M\right) ,$ we denote by $(\pi
^{-1}\left( U\right) ,(x^{a},\dot{x}^{a})),$ the naturally induced chart by
an arbitrarily fixed chart $(U,(x^{a}))$ on $M;$ that is, for a vector $\dot{%
x}\in T_{x}M$, $\dot{x}^{a}$ denote its coordinates in the natural local
basis $\{\partial _{a}\}$ of $T_{x}M$. If there is no risk of confusion, we
will abuse the notation by skipping the indices of the coordinates, i.e., we
will refer to $(x^{a},\dot{x}^{a})$ briefly as~$(x,\dot{x})$. The natural
local coordinate basis of $T_{(x,\dot{x})}TM$ is given by $\{\partial _{a}=%
\tfrac{\partial }{\partial x^{a}},\dot{\partial}_{a}=\tfrac{\partial }{%
\partial \dot{x}^{a}}\}$ and the local coordinate basis of $T_{(x,\dot{x}%
)}^{\ast }TM$ is $\{dx^{a},d\dot{x}^{a}\}$. By $\mathcal{X}\left( M\right) ,$
we denote the module of vector fields on $M$ and by smoothness, we will mean 
$\mathcal{C}^{\infty }$-differentiability.

\subsection{Pseudo-Finsler spaces and Berwald spaces}

A \textit{pseudo-Finsler structure (}or \textit{pseudo-Finsler metric), \cite%
{Bejancu}, }on $M$, is a smooth function $L:\mathcal{A}\rightarrow \mathbb{R}%
,$ defined on a conic subbundle\footnote{%
A conic subbundle of $TM$ is an open subset $\mathcal{A}\subset TM\setminus
0 $ with $\pi (\mathcal{A})=M$, which is stable under positive rescaling of
vectors, i.e.: $\left( x,\dot{x}\right) \in \mathcal{A}\Rightarrow \left(
x,\lambda \dot{x}\right) \in \mathcal{A},\forall \lambda >0.$} $\mathcal{A}%
\subset TM\backslash \{0\},$ with the following properties:

\begin{enumerate}
\item Positive 2-homogeneity:\ $L\left( x,\lambda \dot{x}\right) =\lambda
^{2}L\left( x,\dot{x}\right) ,$ $\forall \lambda >0.$

\item Nondegeneracy: at any $\left( x,\dot{x}\right) \in \mathcal{A}$ and in
one (then, in any) local chart around $\left( x,\dot{x}\right) $, the
Hessian:%
\begin{equation}
g_{ab}\left( x,\dot{x}\right) :=\dfrac{1}{2}\dfrac{\partial ^{2}L}{\partial 
\dot{x}^{a}\partial \dot{x}^{b}}\left( x,\dot{x}\right)
\end{equation}%
is nonsingular.
\end{enumerate}

Any pseudo-Finsler function $L$ can be continuously prolonged as $0$ at $%
\dot{x}=0$.

The conic bundle $\mathcal{A}$ is called the set of \textit{admissible
vectors.} The functions $g_{ab}=g_{ab}\left( x,\dot{x}\right) $ are the
local components of the \textit{Finslerian metric tensor} attached to $L:$ 
\begin{equation}
g:\mathcal{A}\rightarrow T_{2}^{0}M,(x,\dot{x})\mapsto g_{(x,\dot{x}%
)}=g_{ab}dx^{a}\otimes dx^{b}.
\end{equation}

In particular, pseudo-Finsler spaces include:

- pseudo-Riemannian (quadratic in $\dot{x}$) metrics: $L\left( x,\dot{x}%
\right) =a_{ab}\left( x\right) \dot{x}^{a}\dot{x}^{b};$

- classical positive definite Finsler spaces, obtained when $\mathcal{A}%
=TM\setminus \{0\}$ and $g=g\left( x,\dot{x}\right) $ - positive definite;

- various definitions of Finsler spacetimes, obtained when $g=g\left( x,\dot{%
x}\right) $ has Lorentzian signature on an appropriate conic subset of $TM$
see, e.g., \cite{Beem}, \cite{Caponio-Stancarone}, \cite{math-foundations}, 
\cite{Javaloyes2019}, \cite{Laemmerzahl-Perlick}, \cite{Minguzzi2014}.

\textit{Terminology convention:\ }In the following, we will call for
simplicity, \textit{Finsler} (respectively, \textit{Riemannian}) spaces,
pseudo-Finsler (respectively, pseudo-Riemannian) ones - and explicitly state
positive definiteness, or smoothness on the entire $TM\backslash \{0\}$,
when the case.

\bigskip

Assume, in the following, that $(M,L)$ is a Finsler space as above. A
(parametrized) admissible curve is understood as a smooth mapping $c:\left[
a,b\right] \rightarrow M,~\tau \mapsto c(\tau )\equiv \left( x^{a}\left(
\tau \right) \right) $ such that its natural extension to $TM:$%
\begin{equation}
C:\left[ a,b\right] \rightarrow TM,~\tau \mapsto (c\left( \tau \right) ,\dot{%
c}\left( \tau \right) :=\dfrac{dc}{d\tau })
\end{equation}%
lies in $\mathcal{A}.$ For an admissible curve, the Finslerian arc length is
defined as: 
\begin{equation*}
\ell _{c}=\underset{C}{\int }ds:=\overset{b}{\underset{a}{\int }}\sqrt{%
\left\vert L\left( c\left( \tau \right) ,\dot{c}\left( \tau \right) \right)
\right\vert }d\tau .
\end{equation*}%
Geodesics of $\left( M,L\right) ,$ defined as critical curves of the
functional $c\mapsto \ell _{c},$ are given, in arc length parametrization,
by:%
\begin{equation}
\frac{d^{2}x^{a}}{ds^{2}}+2G^{a}\left( x,\frac{dx}{ds}\right) =0,
\label{eq:geodesics}
\end{equation}%
where $G^{a}=\frac{1}{4}g^{ab}\left( \dot{x}^{c}\partial _{c}\dot{\partial}%
_{b}L-\partial _{b}L\right) $ and they can be geometrically described in
several equivalent ways.\ We just list here two of these:

\begin{itemize}
\item $C$ is an integral curve of the \textit{canonical spray }(or \textit{%
geodesic vector field})\textit{\ }$S\in \mathcal{X}\left( \mathcal{A}\right)
:$%
\begin{equation}
S=\dot{x}^{a}\partial _{a}-2G^{a}\dot{\partial}_{a}.  \label{def:spray}
\end{equation}

\item $\dot{C}$ is everywhere horizontal with respect to the \textit{%
canonical nonlinear connection }$N$ on $\mathcal{A},$ locally given by: 
\begin{equation}
G_{~b}^{a}=\dot{\partial}_{b}G^{a}.  \label{def:CNLin}
\end{equation}%
A nonlinear (Ehresmann) connection $N$ on $\mathcal{A}\subset TM\backslash
\{0\}$ is defined by a smooth mapping $\mathcal{H}:\mathcal{A}\mapsto T%
\mathcal{A}$, assigning to each $\left( x,\dot{x}\right) \in \mathcal{A}$ an 
$n$-dimensional\textit{\ }vector subspace $H_{\left( x,\dot{x}\right) }%
\mathcal{A}$ of $T_{\left( x,\dot{x}\right) }\mathcal{A}$ (called the 
\textit{horizontal subspace}) supplementary to the \textit{vertical subspace 
}$V_{\left( x,\dot{x}\right) }\mathcal{A}:=\ker d\pi _{\left( x,\dot{x}%
\right) }$ spanned by $\dot{\partial}_{a}:$%
\begin{equation*}
T_{\left( x,\dot{x}\right) }\mathcal{A}=H_{\left( x,\dot{x}\right) }\mathcal{%
A}\oplus V_{\left( x,\dot{x}\right) }\mathcal{A}.
\end{equation*}%
This is equivalent to the existence on each local chart of a set of
functions $G_{~b}^{a}=G_{~b}^{a}\left( x,\dot{x}\right) $ defining the \emph{%
local adapted basis} on $T\mathcal{A}$ 
\begin{equation}
\{\delta _{a}=\partial _{a}-G^{b}{}_{a}\dot{\partial}_{b};~\dot{\partial}%
_{a}\}\,,\quad H\mathcal{A}=Span\{\delta _{a}\},~\ V\mathcal{A}=Span\{\dot{%
\partial}_{a}\}.  \label{def:adapted_basis}
\end{equation}%
Accordingly, any vector field $X\in \mathcal{X}\left( \mathcal{A}\right) $
admits a unique, coordinate-invariant splitting into a horizontal component $%
hX$ and a vertical one $vX$:%
\begin{equation}
X=hX+vX=X^{a}\delta _{a}+\tilde{X}^{a}\dot{\partial}_{a},
\end{equation}%
for some functions $X^{a}=X^{a}\left( x,\dot{x}\right) ,\tilde{X}^{a}=\tilde{%
X}^{a}\left( x,\dot{x}\right) .$
\end{itemize}

An essential property to be used in the following is that, with respect to
its own canonical nonlinear connection, $L\ $is horizontally constant; in
coordinates, this reads:%
\begin{equation}
\delta _{a}L=0,~\ \ ~\ \ \ \ a=1,...,n.  \label{eq:dL=0}
\end{equation}

\subsection{Berwald spaces}

Berwald spaces are defined as "affinely connected" pseudo-Finsler spaces, in
the sense that their arc length parametrized geodesics are autoparallel
curves of a symmetric affine connection $\nabla $ on the base manifold $M$
(say, with coefficients $\Gamma _{~ab}^{c}=\Gamma _{~ab}^{c}\left( x\right) $%
):%
\begin{equation}
\frac{d^{2}x^{a}}{ds^{2}}+\Gamma _{~ab}^{c}(x)\dfrac{dx^{a}}{ds}\dfrac{dx^{b}%
}{ds}=0.  \label{eq:Berwald_geo}
\end{equation}%
This is equivalent to saying that, in any local chart, the canonical spray
coefficients $G^{c}$ are quadratic in $\dot{x},$ $G_{~b}^{c}$ are linear and 
$\dot{\partial}_{a}G_{~b}^{c}=\Gamma _{~bc}^{a}\left( x\right) $ are
independent of $\dot{x}:$ 
\begin{equation}
2G^{c}=\Gamma _{~ab}^{c}(x)\dot{x}^{a}\dot{x}^{b}~\ \Leftrightarrow
~G_{~b}^{c}=\Gamma _{~ab}^{c}(x)\dot{x}^{a}~\ \Leftrightarrow ~\dot{\partial}%
_{a}G_{~b}^{c}=\Gamma _{~ab}^{c}\left( x\right) .
\label{eq:coeffs_Berwald_spray}
\end{equation}

Obviously, all Riemannian metrics are Berwald metrics; though, as there
exist many non-Riemannian Berwald metrics (see, e.g., \cite{Bao}), a
legitimate question arises:

\begin{center}
\textit{Given a nontrivially Finslerian (non-quadratic) Berwald metric }$L$%
\textit{\ with affine connection }$\nabla ,$ \textit{does there exist a
Riemannian metric whose Levi-Civita connection is precisely }$\nabla $%
\textit{?}
\end{center}

As already stated in the Introduction, for classical, positive definite
Finsler spaces, an affirmative answer is given by the celebrated Szabo's
Metrizability Theorem, \cite{Szabo}. Yet, in Lorentzian signature, at least
one counterexample is known, \cite{Berwald-non-metrizability}.

\bigskip

To answer the question, we will switch for the moment the standpoint and
start with a fixed, symmetric connection $\nabla $ on $M$ and look for the
possible Finsler, respectively Riemann metrics whose canonical connection is 
$\nabla .$ Thus, the connection $\nabla $ is called:

\begin{enumerate}
\item \textit{Finsler metrizable }if there exists a Finsler structure $L$ on 
$M$ whose geodesics are autoparallels of $\nabla .$ Building the associated
spray, respectively, nonlinear connection of $\nabla $ by the rule (\ref%
{eq:coeffs_Berwald_spray}), this is equivalent, \cite{MZ_ELQ}, to the
existence of a nondegenerate solution $L $ of the system:%
\begin{equation}
\delta _{a}L=0,~\ \ \mathbb{C}\left( L\right) =2L,  \label{eq:metrizability}
\end{equation}%
where $\mathbb{C}=\dot{x}^{a}\dot{\partial}_{a}$ (and the last equation (\ref%
{eq:metrizability}) expresses the 2-homogeneity condition for $L$) and $%
\delta _{a}=\partial _{a}-\Gamma _{~ab}^{c}\dot{x}^{b}\dot{\partial}_{c}$
are the elements of the corresponding horizontal adapted basis (\ref%
{def:adapted_basis}).

Note:\ If $\nabla $ is Finsler metrizable, then $L$ is always of Berwald
type.

\item \textit{Riemann metrizable, }if there exists a (pseudo-)Riemannian
metric on $M$ whose Levi-Civita connection is $\nabla .$ This is equivalent
to the existence of a quadratic in $\dot{x}$ solution $L=a_{ab}\left(
x\right) \dot{x}^{a}\dot{x}^{b}$ of (\ref{eq:metrizability}); the Riemann
metrizability of an $SO(3)$-symmetric connection was investigated, e.g., in 
\cite{tanaka}.
\end{enumerate}

The above question is thus translated into:

\begin{center}
\textit{If }$\nabla $\textit{\ allows for a non-quadratic solution of (\ref%
{eq:metrizability}), does it also allow for a quadratic one?}
\end{center}

\bigskip

Noting that (\ref{eq:metrizability}) also implies:%
\begin{equation}
\left[ \delta _{a},\delta _{b}\right] L=0,~\left[ \delta _{a},\left[ \delta
_{b},\delta _{c}\right] \right] L=0,\   \label{eq:consistency_conds}
\end{equation}%
which can be interpreted as consistency conditions for the given PDE\
system, it turns out that the answer depends on the \textit{holonomy
distribution} of the corresponding spray (\ref{def:spray}), (\ref%
{eq:coeffs_Berwald_spray})). This is defined, \cite{MZ_ELQ}, as the
distribution on $\mathcal{A}$ generated by the horizontal vector fields and
their successive Lie brackets, briefly:%
\begin{equation}
{{\mathcal{D}}_{\mathcal{H}}}:=\!Span\left\{ \delta _{a},\left[ \delta
_{a},\delta _{b}\right] ,\left[ \delta _{a},\left[ \delta _{b},\delta _{c}%
\right] \right] ,...\right\} .  \label{def:DH}
\end{equation}%
Thus, ${{\mathcal{D}}_{\mathcal{H}}}$ gives the smallest involutive
distribution containing the horizontal distribution $\mathcal{H}$: 
\begin{equation*}
{{\mathcal{D}}_{\mathcal{H}}}=\mathcal{HA}\oplus \mathcal{V}({{\mathcal{D}}_{%
\mathcal{H}}}).
\end{equation*}%
Having a look at the original system (\ref{eq:metrizability}), it follows
that only the vertical parts of the Lie brackets $\left[ \delta _{a},\delta
_{b}\right] ,$ $\left[ \delta _{a},\left[ \delta _{b},\delta _{c}\right] %
\right] ,...$ - i.e., elements, of $\mathcal{V}({{\mathcal{D}}_{\mathcal{H}}}%
)$ - actually impose extra constraints on $L;$ but, the latter can be
expressed in terms of the \textit{curvature} of the corresponding nonlinear
connection $N$.

The curvature\textit{\ }of $N$ is, by definition the mapping%
\begin{equation}
R:\mathcal{X}\left( \mathcal{A}\right) \times \mathcal{X}\left( \mathcal{A}%
\right) \rightarrow \mathcal{X}\left( \mathcal{A}\right) ,~\ \ \ \ R\left(
X,Y\right) =-v\left[ hX,hY\right]  \label{def:R_hv}
\end{equation}%
measuring the non-integrability of the horizontal distribution. In
coordinates: 
\begin{equation}
R=\frac{1}{2}R_{~bc}^{a}\dot{\partial}_{a}\otimes dx^{b}\wedge dx^{c},~\ \ \ %
\left[ \delta _{b},\delta _{c}\right] =:R_{~bc}^{a}\dot{\partial}%
_{a}=(\delta _{c}G_{~b}^{a}-\delta _{b}G_{~c}^{a})\dot{\partial}_{a};\ 
\label{def:R}
\end{equation}%
for Berwald spaces, there holds:\ $R_{~bc}^{a}=R_{d~bc}^{~a}\left( x\right) 
\dot{x}^{d},$ where $R_{d~bc}^{~a}\left( x\right) $ are the curvature
components of the affine connection $\nabla .$

From (\ref{def:R_hv}), we get, for instance, that: $\mathrm{Im}\,R\subset 
\mathcal{V}({{\mathcal{D}}_{\mathcal{H}}})$ and ${{\mathcal{D}}_{\mathcal{H}}%
}=\mathcal{HA}$ if and only if $R\equiv 0$.

\subsection{Spatially spherically symmetric Berwald metrics\label%
{sec:classif_Berwald}}

Set, for the rest of the paper, $\dim M=4;$ moreover, since we are looking
for a coordinate characterization of our metrics, we can assume with no loss
of generality that $M$ is an open subset of $\mathbb{R}^{4}\backslash \{0\},$
equipped with spherical coordinates $\left( t,r,\theta ,\varphi \right) .$
In Lorentzian signature, $t$ is physically interpreted as time and $\left(
r,\theta ,\varphi \right) ,$ as spatial coordinates; we will keep for
simplicity the terminology, though we do not fix the signature of $L.$

In the following, we will assume that $L:\mathcal{A\rightarrow }\mathbb{R}$
is \textit{spatially spherically symmetric, }that is, $SO\left( 3\right) $
is a subgroup of the isometry group of $L,$ leaving the time function $t$
invariant; consequently, the generators of $SO\left( 3\right) $ solve the
Finslerian Killing equation%
\begin{equation}
X^{C}(L)=0,
\end{equation}%
where $X^{C}$ denotes the complete (natural) lift of $X\in \mathfrak{so}%
\left( 3\right) $ to $TM,$ see, e.g., \cite{Bucataru-book}.

In naturally induced coordinates $\left( x,\dot{x}\right) :=\left(
t,r,\theta ,\varphi ,\dot{t},\dot{r},\dot{\theta},\dot{\varphi}\right) $ on $%
TM,$ any $SO\left( 3\right) $-invariant, 4-dimensional Finsler metric is
known to be, \cite{Pfeifer-Wohlfarth}, of the form%
\begin{equation*}
L(x,\dot{x})=L(t,r,\dot{t},\dot{r},w),\quad w^{2}=\dot{\theta}^{2}+\dot{\phi}%
^{2}\sin ^{2}\theta \,,
\end{equation*}

Moreover, If a Berwald-Finsler function $L$ is $SO\left( 3\right) $%
-invariant, then its canonical connection $\nabla $ is also $SO\left(
3\right) $-invariant; see, e.g., \cite{Hohmann} for more details on $%
SO\left( 3\right) $-invariant connections.

In the following, we will work in the class of \textit{nontrivially
Finslerian} (i.e., non-Riemannian\footnote{%
For the class of connections corresponding to $SO\left( 3\right) $-invariant 
\textit{Riemannian }metrics, we refer to \cite{tanaka}.}) $SO\left( 3\right) 
$-invariant Berwald metrics on the said space. A full classification thereof
- and accordingly, of their canonical affine connections $\nabla $, which we
briefly present below, was found in \cite{Voicu}.

A first remark is that $\nabla $ must satisfy:%
\begin{equation}
\left[ \delta _{t},\left[ \delta _{t},\delta _{r}\right] \right] \sim \left[
\delta _{t},\delta _{r}\right] ,~\ \ \ \left[ \delta _{r},\left[ \delta
_{t},\delta _{r}\right] \right] \sim \left[ \delta _{t},\delta _{r}\right]
,\   \label{proportionality_ttr}
\end{equation}%
where the sign $\sim $ means proportionality.

Moreover, coefficients of $\nabla $ are parametrized by 10 functions $%
k_{i}=k_{i}(t,r)$ and the curvature tensor of $\Gamma $ is described by 14
coefficients $a_{i}=a_{i}(t,r)$ - see the Appendix for their precise
definitions; a useful identity, which we mention here, is:%
\begin{equation}
a_{5}=a_{9}-a_{12}.  \label{eq:a5}
\end{equation}

Finally, depending on the possible further relations between $k_{i}$ and $%
a_{i}$, there exist five classes of non-Riemannian $SO\left( 3\right) $%
-invariant Berwald-Finsler functions, split into two major cases:

\begin{itemize}
\item[I.] \textbf{If }$k_{7},k_{8},k_{9},k_{10}$\textbf{\ are not all zero},
one may assume with no loss of generality, that $k_{10}\not=0.$ Denoting%
\begin{equation}
a:=\dfrac{k_{7}}{k_{10}},~\ \ b=\dfrac{k_{8}}{k_{10}},~\ \ c=\dfrac{%
k_{9}k_{10}-k_{7}k_{8}}{k_{10}^{2}},  \label{def:abc}
\end{equation}%
the connection $\nabla $ must satisfy:%
\begin{equation}
\begin{split}
A& :=b\left( aa_{1}+a_{2}\right) +\left( ab+c\right) \left(
aa_{3}+a_{4}\right) -a_{5}\left( 2ab+c\right) =0\,, \\
B& :=a\left( aa_{3}+a_{4}\right) -\left( aa_{1}+a_{2}\right) =0, \\
C& :=\left( ab+c\right) a_{3}+b\left( aa_{3}+a_{4}\right) +b(a_{1}-2a_{5})=0,
\\
a_{6}& =aa_{7},~a_{8}=ba_{7},~\ a_{9}=\left( ab+c\right) a_{7},~\
a_{10}=aa_{11},~a_{12}=ba_{11},~\ \ a_{13}=\left( ab+c\right) a_{11}.
\end{split}
\label{eq:ABC}
\end{equation}

Further, the quantities%
\begin{eqnarray}
D &=&aa_{3}-a_{1}+a_{5},~\ \ E=ba_{3},~\ \ F=aa_{3}-a_{1}, \\
G &=&2\left( k_{1}-k_{4}a\right) ,~\ \ \tilde{G}=G-2k_{8},~\ \ \ H=2\left(
k_{2}-k_{6}a\right) ,~\ \ \ \tilde{H}=H-2k_{9},
\end{eqnarray}%
will help us to distinguish the possible classes, as follows:
\end{itemize}

\begin{enumerate}
\item[\textbf{Class 1:}] $D\neq 0$. Then, $\left[ \delta _{t},\delta _{r}%
\right] \not=0$ and $L$ has a \textit{power law }formula:%
\begin{equation}
L=\vartheta (t,r)u^{2-2\lambda }\left( v+\rho u^{2}\right) ^{\lambda }\,,
\label{def:power_law}
\end{equation}%
where:%
\begin{equation}
u=\dot{t}-a\dot{r},\quad v=c\dot{r}^{2}+2b\dot{t}\dot{r}-w^{2}\,.
\label{def:uv}
\end{equation}%
$\lambda =\frac{F}{D}\not=1$ is a constant\footnote{%
The value $\lambda =1$ corresponds to Riemannian metrics, which thus do not
satisfy the hypothesis of being properly Finslerian.}, $\rho =\frac{E}{D}$
and $\vartheta (t,r)=\exp \left( \int \left( G-\lambda \tilde{G}\right)
dt+\left( H-\lambda \tilde{H}\right) dr\right) $ (where the integral is
taken along an arbitrary path in the $\left( t,r\right) $ plane, joining
some $\left( t_{0},r_{0}\right) $ to $\left( t,r\right) $).

\item[\textbf{Class 2:}] $D=0,E,F\neq 0$. Then $\left[ \delta _{t},\delta
_{r}\right] \not=0$ and $L$ is of \textit{exponential type:}%
\begin{equation}
L=\varphi (t,r)u^{2}\exp (\frac{v}{u^{2}}\mu )\,,
\label{def_exponential_law}
\end{equation}%
where $\mu =\frac{F}{E}$, $u,v$ are as above and $\varphi (t,r)=\exp \left(
\int \left( G+2k_{4}b\mu \right) dt+\left( H+2k_{6}b\mu \right) dr\right) $
(where the integral is taken along an arbitrary path in the $\left(
t,r\right) $ plane, joining some $\left( t_{0},r_{0}\right) $ to $\left(
t,r\right) $).

\item[\textbf{Class 3:}] $D=E=F=0.$ Then, $\left[ \delta _{t},\delta _{r}%
\right] =0$ and:%
\begin{equation}
L=u^{2}\Xi (z),\quad z=\frac{v(\dot{\tilde{t}},\dot{\tilde{r}},w)}{u(\dot{%
\tilde{t}},\dot{\tilde{r}},w)^{2}}\,,  \label{def_class_3}
\end{equation}%
where $\Xi =\Xi (z)$ is an arbitrary function of the single variable $z$ and 
$u,v$ as defined above are, up to a suitable coordinate change, independent
of the new coordinates $\tilde{t}$ and $\tilde{r}$. In this case, $b$ and $c$
cannot simultaneously vanish.
\end{enumerate}

\begin{itemize}
\item[II.] \textbf{If} $k_{7}=k_{8}=k_{9}=k_{10}=0$, there exist two classes:
\end{itemize}

\begin{enumerate}
\item[\textbf{Class 4:}] $\left[ \delta _{t},\delta _{r}\right] =0.$ Then,
up to a suitable coordinate change $\left( t,r\right) \mapsto \left( \tilde{t%
},\tilde{r}\right) $, $L$ is given by a free a function of two variables: 
\begin{equation}
L=L(\dot{t},\dot{r},w)=\dot{t}^{2}L(1,p,s)\,,\quad p=\frac{\dot{r}}{\dot{t}}%
,\quad s=\frac{w}{\dot{t}}\,.  \label{def_class_5}
\end{equation}

\item[\textbf{Class 5:}] $\left[ \delta _{t},\delta _{r}\right] \not=0$.
Then $a_{1}a_{3}-a_{2}a_{4}\not=0$ and%
\begin{equation}
L=w^{2}\xi (q)\,,\quad q=\frac{\dot{t}e^{I\left( p\right) -\varphi }}{w},~\
\ \ p=\frac{\dot{r}}{\dot{t}},  \label{def_class_4}
\end{equation}%
where $\xi =\xi (q)$ is an arbitrary function of the single variable $q;$ $%
\varphi $ is a function of $t,r$ only (see \cite{Voicu} for its precise
expression) and%
\begin{equation}
I=\int \dfrac{a_{1}+a_{2}p}{a_{2}p^{2}+\left( a_{1}-a_{4}\right) p-a_{3}}dp.
\label{I_p}
\end{equation}
\end{enumerate}

Let use examine, in the following, the Riemann metrizability of these
structures.

\section{Statement of the main result\label{sec:main}}

In the following, we first make some general remarks on 4-dimensional affine
connections; then, in the case of $SO(3)$-symmetry, we state our main result.

Fix $\dim M=4$ and assume $\nabla $ is the canonical connection of an
arbitrary, nontrivially Finslerian Berwald metric $L:\mathcal{A}\rightarrow 
\mathbb{R},$ for some conic subbundle $\mathcal{A}\subset TM\backslash
\{0\}. $ Since the consistency conditions (\ref{eq:consistency_conds}) are
first order quasilinear PDE's in $L,$ which can thus be regarded as linear
and homogeneous algebraic equations in the derivatives $\dot{\partial}_{a}L$%
, it turns out that at most three of these can be independent, in other
words,%
\begin{equation}
\dim \mathcal{V}\left( \mathcal{D}_{\mathcal{H}}\right) \leq 3,
\end{equation}%
where $\mathcal{V}\left( \mathcal{D}_{\mathcal{H}}\right) $ is the vertical
part of the holonomy distribution corresponding to the spray $S$ given by %
\eqref{Eq:Geodesic_S}.

Actually, the case when the $\dim \mathcal{V}\left( \mathcal{D}_{\mathcal{H}%
}\right) $ is maximal, completely constrains the $\dot{x}$-dependence of $L,$
as shown below.

\begin{lemma}
\label{lem:maximal_dim}If $\dim \mathcal{V}\left( \mathcal{D}_{\mathcal{H}%
}\right) =3,$ then any two Finsler functions $L,\tilde{L}:\mathcal{%
A\rightarrow }\mathbb{R}$ which locally metrize $\nabla ,$ coincide up to a
constant factor.
\end{lemma}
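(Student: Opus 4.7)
The plan is to exploit the fact that both metrizing functions share the \emph{same} nonlinear connection (hence the same horizontal distribution and the same holonomy distribution), and then to use the linear-algebraic rigidity imposed by a maximal-dimensional vertical holonomy.

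First I would observe that since $L$ and $\tilde L$ both metrize the same affine connection $\nabla$, the spray coefficients $G^a=\tfrac{1}{2}\Gamma^{a}_{\ bc}(x)\dot x^b\dot x^c$ and the nonlinear connection coefficients $G^{a}_{\ b}=\Gamma^{a}_{\ bc}(x)\dot x^c$ given by \eqref{eq:coeffs_Berwald_spray} are identical for both. Consequently, the horizontal vector fields $\delta_a$ and therefore the holonomy distribution $\mathcal{D}_{\mathcal{H}}$ in \eqref{def:DH} are \emph{the same} for $L$ and $\tilde L$. The Finsler metrizability system \eqref{eq:metrizability}, supplemented by its consistency conditions \eqref{eq:consistency_conds}, then implies $V(L)=V(\tilde L)=0$ for every $V\in\mathcal{V}(\mathcal{D}_{\mathcal{H}})$.

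Next I would pick three linearly independent vertical vector fields $V_{1},V_{2},V_{3}\in\mathcal{V}(\mathcal{D}_{\mathcal{H}})$; write them as $V_{i}=V_{i}^{a}\dot\partial_{a}$. The six equations $V_{i}(L)=0$, $V_{i}(\tilde L)=0$ say that, pointwise on $\mathcal{A}$, the vertical gradients $(\dot\partial_{1}L,\dots,\dot\partial_{4}L)$ and $(\dot\partial_{1}\tilde L,\dots,\dot\partial_{4}\tilde L)$ both lie in the $1$-dimensional annihilator, in the $4$-dimensional fibre, of the three independent covectors $(V_{i}^{a})$. Hence there exists a scalar function $\mu$ on $\mathcal{A}$ with $\dot\partial_{a}L=\mu\,\dot\partial_{a}\tilde L$ wherever $\tilde L$ has a nonzero vertical gradient (which holds on a dense open set by the nondegeneracy of $\tilde L$). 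Contracting with $\dot x^{a}$ and invoking the $2$-homogeneity half of \eqref{eq:metrizability} gives $2L=\mu\cdot 2\tilde L$, i.e.\ $\mu=L/\tilde L$. In particular $\mu$ is smooth on an open dense subset.

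Now I would pin down $\mu$. Differentiating $L=\mu\tilde L$ vertically yields
\begin{equation*}
\dot\partial_{a}L=(\dot\partial_{a}\mu)\,\tilde L+\mu\,\dot\partial_{a}\tilde L,
\end{equation*}
and comparing with $\dot\partial_{a}L=\mu\,\dot\partial_{a}\tilde L$ forces $(\dot\partial_{a}\mu)\tilde L=0$, so $\mu$ depends only on the base coordinates $x$. Applying the horizontal derivative and using $\delta_{a}L=\delta_{a}\tilde L=0$ gives
\begin{equation*}
0=\delta_{a}L=(\delta_{a}\mu)\tilde L+\mu\,\delta_{a}\tilde L=(\delta_{a}\mu)\tilde L,
\end{equation*}
hence $\delta_{a}\mu=0$; but for a function of $x$ alone, $\delta_{a}\mu=\partial_{a}\mu$, so $\mu$ is locally constant. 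Extending from the dense open subset by continuity finishes the argument.

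The main obstacle is really the pointwise linear-algebra step: making sure that three independent elements of $\mathcal{V}(\mathcal{D}_{\mathcal{H}})$ actually exist as a \emph{smooth} local frame of a $3$-dimensional subbundle (so that the scalar $\mu$ is smooth), and being careful about the set where $\tilde L$ or its vertical gradient could vanish. Once the proportionality of vertical gradients is smoothly established, the homogeneity contraction and the single horizontal identity $\delta_{a}\mu=\partial_{a}\mu$ do the rest with no further work.
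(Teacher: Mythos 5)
Your proposal is correct and follows essentially the same route as the paper: use $\dim\mathcal{V}(\mathcal{D}_{\mathcal{H}})=3$ to force the vertical gradients of $L$ and $\tilde L$ into a common one-dimensional annihilator, contract with $\dot x^a$ via $2$-homogeneity to get $L=\mu\tilde L$, then deduce $\dot\partial_a\mu=0$ and $\delta_a\mu=0$ to conclude $\mu$ is constant. Your version merely spells out more explicitly the pointwise linear algebra and the smoothness/zero-set caveats that the paper dispatches in one line.
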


\begin{proof}
Fix an arbitrary symmetric affine connection $\nabla $ as above and assume
that $L,\tilde{L}:\mathcal{A}\rightarrow \mathbb{R}$ both metrize $\nabla .$
As $\dim \mathcal{V}\left( \mathcal{D}_{\mathcal{H}}\right) =3,$ it follows
that the dimension of the space of solutions of the algebraic system (\ref%
{eq:consistency_conds}) is 1, meaning that there must exist a function $%
f=f\left( x,\dot{x}\right) :\mathcal{A\rightarrow }\mathbb{R},$ such that,
in any local chart: 
\begin{equation}
\dot{\partial}_{a}L=f~\dot{\partial}_{a}\tilde{L},~\ \ ~\ \ \ \ \ \ \
a=0,...,3.
\end{equation}%
Moreover, on $\mathcal{A}\backslash \tilde{L}^{-1}\{0\},$ $f$ is smooth.
Contracting this equality with $\dot{x}^{a}$ and taking into account the
2-homogeneity of $L$ and $\tilde{L},$ this implies: $L=f\tilde{L},$ which,
differentiating again by $\dot{x}^{a}$ and taking into account the above
equality, gives: $\dot{\partial}_{a}f=0$.

On the other hand, recalling that $L$ and $\tilde{L}$ both metrize $\nabla ,$
they must both be solutions of the PDE system (\ref{eq:metrizability}),
which implies $\delta _{a}f=0.$ Together with $\dot{\partial}_{a}f=0,$ this
implies that $f$ must be a constant, as claimed.
\end{proof}

For more details on the metrizability freedom of a spray, see \cite%
{Mu-Elgendi}.

\begin{remark}
\label{remark:nec_cond}\textbf{(Necessary Riemann metrizability
conditions):\ }Assume a 4-dimensional, non-Riemannian Berwald metric $L$
admits an affinely equivalent Riemannian metric. Then:

\begin{enumerate}
\item $\dim \mathcal{V}\left( \mathcal{D}_{\mathcal{H}}\right) \leq 2;$ this
follows immediately from the above Lemma.

\item The Ricci tensor of the canonical connection $\nabla $ must be
symmetric - as this is true for the Ricci tensor of any Riemannian metric.
\end{enumerate}
\end{remark}

\bigskip

In the particular case of $SO\left( 3\right) $-symmetry, we are able to
prove our main result, stating that the above conditions are also sufficient:

\begin{theorem}
\label{thm:main} Assume $L:\mathcal{A}\rightarrow \mathbb{R}$ (where $%
\mathcal{A}\subset TM\backslash \{0\}$ is a conic subbundle) is an arbitrary
signature, spatially spherically symmetric, non-Riemannian Berwald-Finsler
function on the open subset $M\subset \mathbb{R}^{4}\backslash \{0\}.$ Then,
there exists a pseudo-Riemannian metric $\mathbf{A}:TM\rightarrow \mathbb{R}$
which is affinely equivalent to $L,$ if and only if:

\begin{enumerate}
\item Its holonomy distribution $\mathcal{D}_{\mathcal{H}}$ satisfies%
\begin{equation}
\dim \mathcal{V}\left( \mathcal{D}_{\mathcal{H}}\right) \leq 2.
\label{eq:main1}
\end{equation}

\item The curvature of its affine connection has a symmetric Ricci tensor.
\end{enumerate}
\end{theorem}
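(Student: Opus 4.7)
The plan is to split the argument along the established classification of Section \ref{sec:classif_Berwald}. Necessity is immediate from Remark \ref{remark:nec_cond}, so the entire work is sufficiency: assuming \eqref{eq:main1} and a symmetric $\mathrm{Ric}(\nabla)$, exhibit a nondegenerate quadratic solution of the system \eqref{eq:metrizability}. Since every $SO(3)$-invariant, non-Riemannian Berwald structure lies in one of Classes 1--5, I would work case by case, using the explicit parametrization of $\nabla$ by the $k_i$ and $a_i$ listed in the Appendix.

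In every class, $SO(3)$-invariance constrains a candidate pseudo-Riemannian metric to have the form
$$\mathbf A=\alpha(t,r)\dot t^{2}+2\beta(t,r)\dot t\dot r+\gamma(t,r)\dot r^{2}+\delta(t,r)w^{2},$$
with $\delta\not=0$ and $\alpha\gamma-\beta^{2}\not=0$; the $2$-homogeneity half of \eqref{eq:metrizability} is then automatic by Euler's theorem, so everything reduces to the horizontal equations $\delta_{a}\mathbf A=0$. Substituting the ansatz and the Berwald coefficients of the class under consideration yields a quasi-linear first-order PDE system in $\alpha,\beta,\gamma,\delta$. The two hypotheses enter this system in complementary ways: the bound $\dim\mathcal V(\mathcal D_\mathcal H)\leq 2$ limits the number of independent vertical obstructions coming from iterated brackets like $[\delta_a,[\delta_b,\delta_c]]$, hence the number of purely algebraic constraints at a point; the symmetry of $\mathrm{Ric}(\nabla)$, rewritten through the $a_i$'s and the identity \eqref{eq:a5}, eliminates the residual obstruction to the existence of a symmetric $(0,2)$-solution.

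Class 3, in which $[\delta_t,\delta_r]=0$, is the simplest: the horizontal PDEs decouple and can be integrated directly in $t$ and $r$. In Classes 1 and 2, I would use the relations \eqref{eq:ABC} together with \eqref{eq:a5} to reduce $\delta_a\mathbf A=0$ to an integrable Pfaffian system whose solution is obtained by a line integral on the $(t,r)$ plane, paralleling the construction of $\vartheta$ and $\varphi$ in \eqref{def:power_law} and \eqref{def_exponential_law}. For Classes 4 and 5, where $k_7=\dots=k_{10}=0$, the expectation consistent with the Introduction is that either \eqref{eq:main1} or Ricci symmetry generically fails; the proof in these cases consists in verifying that whenever both hypotheses \emph{do} hold the resulting PDE system still admits a nondegenerate quadratic solution, with the explicit metric identified after the coordinate change that brings $L$ into the forms \eqref{def_class_5} or \eqref{def_class_4}.

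The main obstacle is bookkeeping rather than any single conceptual step: each class involves a different subset of the $k_i$ and its own reduction, and one has to uniformly tie the bracket identities \eqref{proportionality_ttr}, the bound on $\dim\mathcal V(\mathcal D_\mathcal H)$, and the vanishing of the antisymmetric part of $\mathrm{Ric}(\nabla)$ to the solvability locus of the reduced PDE system. I expect Class 1 (the power-law case) to be the hardest, because the exponent $\lambda=F/D$ enters the integrability conditions nonlinearly, and reconciling the quadratic ansatz for $\mathbf A$ with the intrinsically non-quadratic structure $\vartheta(t,r)\,u^{2-2\lambda}(v+\rho u^{2})^{\lambda}$ forces precise algebraic relations among $a,b,c,\lambda$, which should turn out to be exactly those encoded by the two hypotheses of the theorem.
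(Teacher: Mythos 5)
Your overall architecture matches the paper's: necessity from Remark \ref{remark:nec_cond}, then sufficiency by a case-by-case sweep through Classes 1--5 using the $k_i$, $a_i$ parametrization. But you have the structural roles of the classes inverted, and this creates a genuine gap. In the paper, Classes 1 and 2 (power law and exponential law) are precisely the classes that are \emph{never} Riemann metrizable: the essential step there is to show that $\left[\delta_t,\delta_r\right],\left[\delta_t,\delta_\theta\right],\left[\delta_t,\delta_\varphi\right]$ are linearly independent (via $a_5\neq 0$, which forces $a_7,a_9\neq 0$ and a nonvanishing $3\times 3$ minor), hence $\dim\mathcal V(\mathcal D_\mathcal H)=3$, so hypothesis \eqref{eq:main1} fails and the equivalence holds with both sides false. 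Your plan instead is to integrate a Pfaffian system for a quadratic ansatz in these classes "paralleling the construction of $\vartheta$ and $\varphi$" --- that construction cannot succeed, and nothing in your outline would tell you why the theorem is nevertheless true for these classes. Symmetrically, your expectations for Classes 4 and 5 are wrong: Class 4 always satisfies both hypotheses and is always metrizable (with $\dim\mathcal V(\mathcal D_\mathcal H)=1$), and Class 5 always satisfies \eqref{eq:main1}, with metrizability holding exactly when $a_1+a_4=0$, i.e.\ when the Ricci tensor is symmetric.

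There is a second, related gap: your ansatz restricts the candidate metric $\mathbf A$ to be $SO(3)$-invariant. That is harmless for an existence proof, but if you ever needed to conclude \emph{non}-existence from the failure of that restricted system (as you implicitly would in Classes 1 and 2 once the Pfaffian system turns out non-integrable), the argument would be incomplete, since an affinely equivalent metric need not a priori share the symmetry. The paper closes this hole with Lemma \ref{lem:maximal_dim}: when $\dim\mathcal V(\mathcal D_\mathcal H)=3$, any two Finsler functions metrizing $\nabla$ agree up to a constant factor, so no quadratic solution of \eqref{eq:metrizability} of any kind can coexist with a non-quadratic one. You need that lemma (or an equivalent uniqueness statement) to make the negative half of the case analysis airtight.
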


\begin{proof}
The necessity of conditions 1. and 2. follows from Remark \ref%
{remark:nec_cond}.

The proof of their sufficiency will be made in the next section and
essentially relies on a case-by-case analysis of Classes 1-5. Here is an
outline:

- We first establish in Lemma \ref{lem:symmetric_Ric} the necessary and
sufficient conditions for the symmetry of the connection Ricci tensor for
any non-Riemannian Berwald-Finsler function. It turns out that the only
component of the connection Ricci tensor which may be non-symmetric is $%
R_{tr};$ also, we prove some more proportionality relations between the Lie
brackets $\left[ \delta _{a},\delta _{b}\right] .$

- For Class 1 (non-Riemannian power law metrics) and Class 2 (exponential
law metrics), we show in Lemmas \ref{lem:power_law} and \ref{lem:expo_law}
that $\dim \mathcal{V}\left( \mathcal{D}_{\mathcal{H}}\right) =3.$ Hence,
these classes do not satisfy the hypothesis 1., which, as seen above, is a
necessary condition for the Riemann metrizability of $L$; therefore, these
Finsler metrics do not admit any affinely equivalent Riemannian metric
(either spherically symmetric or not).

- Classes 3 and 4 are shown in Lemmas \ref{lem:Class_3} and \ref{lem:Class_4}
to always satisfy both the above hypotheses and to be always Riemann
metrizable.

- For $L$ belonging to Class 5, we prove in Lemma \ref{lem:Class_5} that: $%
\dim \mathcal{V}\left( \mathcal{D}_{\mathcal{H}}\right) \leq 2$, that is,
hypothesis 1. is always satisfied, whereas hypothesis 2 is equivalent to, $%
a_{1}+a_{4}=0.$ If the latter happens, we build an affinely equivalent
metric for $L$.
\end{proof}

\section{Proof of sufficiency of conditions 1. and 2.\label{sec:proofs}}

\subsection{Preliminary results\label{sec:lemmas}}

Set, in the following $M,$ as an open subset of $\mathbb{R}^{4}\backslash
\{0\}$ equipped with spherical coordinates and denote by $\nabla ,$ an $%
SO\left( 3\right) $-symmetric, torsion-free connection on $M$.

\begin{lemma}
\label{lem:symmetric_Ric}The connection Ricci tensor of an $SO\left(
3\right) $-invariant, torsion-free connection is symmetric if and only if%
\begin{equation}
R_{rt}-R_{tr}\equiv a_{1}+a_{4}+2a_{5}=0.  \label{eq:symmetric_Ricci}
\end{equation}
\end{lemma}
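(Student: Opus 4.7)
The plan is to reduce the iff to a single scalar equation and a direct computation.

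First, I would use representation theory to cut the number of potentially asymmetric components of the Ricci tensor down to one. Since $\nabla$ is $SO(3)$-invariant, so is its Ricci tensor. Under the $SO(3)$-action, which acts trivially on $(t,r)$ and by rotations on $(\theta,\varphi)$, any invariant $(0,2)$-tensor on $M$ must have the block structure: an arbitrary $2\times 2$ matrix on the $(t,r)$-block (for which the only obstruction to symmetry is the single scalar $R_{tr}-R_{rt}$), a scalar multiple of $d\theta^2+\sin^2\theta\,d\varphi^2$ on the purely angular block (automatically symmetric), and vanishing mixed components between these two blocks. Consequently, $R_{ab}=R_{ba}$ in all entries except possibly $(t,r)$, so the Ricci tensor is symmetric if and only if $R_{rt}-R_{tr}=0$.

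Second, I would evaluate $R_{rt}-R_{tr}$ in terms of the $a_i$. Using the parametrization of $R^{a}{}_{bcd}$ by the 14 functions $a_i=a_i(t,r)$ recorded in the Appendix, I would compute the two contractions $R_{rt}=R^{a}{}_{rat}$ and $R_{tr}=R^{a}{}_{tar}$, taking care of both the $a\in\{t,r\}$ and $a\in\{\theta,\varphi\}$ summation terms, and then simplify the resulting expression using the identity \eqref{eq:a5}, $a_5=a_9-a_{12}$, which should collapse the difference to $a_1+a_4+2a_5$. A shortcut would be to invoke the first Bianchi identity for a torsion-free connection, giving $R_{rt}-R_{tr}=-R^{a}{}_{atr}$, thereby reducing the task to computing the homothetic (trace) curvature of $\nabla$ along the $(t,r)$-plane only.

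The main obstacle is not conceptual but notational: out of the 14 curvature coefficients, only a handful eventually contribute to $R_{rt}-R_{tr}$, and some care is required to handle the angular-index summation correctly in spherical coordinates so that no spurious symmetric piece is absorbed or lost. Once the explicit formula $R_{rt}-R_{tr}=a_1+a_4+2a_5$ is in place, the ``if and only if'' follows at once from the reduction in the first step, which has already identified this combination as the unique obstruction to symmetry of $R_{ab}$.
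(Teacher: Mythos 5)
Your second step—computing $R_{rt}-R_{tr}$ by contracting the curvature components from the Appendix and collapsing the result with the identity $a_5=a_9-a_{12}$—is exactly what the paper does, and it is correct: one finds $R_{tr}=-a_1+2a_{12}$, $R_{rt}=a_4+2a_9$, hence $R_{rt}-R_{tr}=a_1+a_4+2a_5$.

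The gap is in your first step, the representation-theoretic reduction. You claim that the purely angular block of any $SO(3)$-invariant $(0,2)$-tensor is a scalar multiple of $d\theta^2+\sin^2\theta\,d\varphi^2$ and hence automatically symmetric. That is false: the area form $\sin\theta\,(d\theta\otimes d\varphi-d\varphi\otimes d\theta)$ is also $SO(3)$-invariant (at a point the isotropy group $SO(2)$ fixes a two-dimensional space of bilinear forms on the tangent plane to the orbit sphere, spanned by the metric \emph{and} the symplectic form). So invariance alone does not exclude an antisymmetric contribution $R_{\theta\varphi}-R_{\varphi\theta}\neq 0$, and your reduction of the lemma to the single scalar $R_{rt}-R_{tr}$ is not yet justified. (The vanishing of the mixed $(t,r)$--$(\theta,\varphi)$ components does follow from Schur, as you say.) The gap is easily repaired: either check directly from the Appendix that $R_{\theta\varphi}=R_{\varphi\theta}=0$ (and likewise for the mixed components), which is what the paper does, or push your Bianchi-identity remark further than the $(t,r)$ pair. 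Indeed, for a torsion-free connection the contracted first Bianchi identity gives $R_{bc}-R_{cb}=-R_{a\;bc}^{\;a}$, i.e. the antisymmetric part of the Ricci tensor is the exterior derivative of the trace $\Gamma^{a}{}_{ba}\,dx^{b}$; for the connection \eqref{eq:appcon} this trace equals $(k_1+k_6+2k_8)\,dt+(k_2+k_5+2k_9)\,dr+\cot\theta\,d\theta$, whose differential has only a $dt\wedge dr$ component. Either repair makes your argument complete, and the second one is a genuinely cleaner route than the paper's component-by-component verification.
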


\begin{proof}
Use:%
\begin{equation}
R_{b~cd}^{~a}=\dot{\partial}_{b}R_{~cd}^{a},~\ \ \ R_{bc}=R_{b~ca}^{~a}
\end{equation}%
and the expressions of the curvature components in the Appendix, to find:%
\begin{eqnarray}
R_{tr} &=&R_{t~rt}^{~t}+R_{t~rr}^{~r}+R_{t~r\theta }^{~\theta
}+R_{t~r\varphi }^{~\varphi }=-a_{1}+2a_{12}, \\
R_{rt} &=&R_{r~tt}^{~t}+R_{r~tr}^{~r}+R_{r~t\theta }^{~\theta
}+R_{r~t\varphi }^{~\varphi }=a_{4}+2a_{9}
\end{eqnarray}%
Subtracting these two equalities and using the identity $a_{5}=a_{9}-a_{12},$
we find: 
\begin{equation}
R_{rt}-R_{tr}=0\ \Longleftrightarrow ~\ ~a_{1}+a_{4}+2a_{5}=0.
\end{equation}%
Moreover, a direct computation then reveals that, all the other components
of the Ricci tensor are always symmetric, as:%
\begin{equation}
R_{t\theta }=R_{\theta t}=R_{r\theta }=R_{\theta r}=R_{t\varphi }=R_{\varphi
t}=R_{r\varphi }=R_{\varphi r}=R_{\varphi \theta }=R_{\theta \varphi }=0.
\end{equation}
\end{proof}

If, moreover, $\nabla $ is the canonical affine connection of a nontrivially
Finslerian Berwald metric, then we find another result.

\begin{lemma}
\label{lem:theta_t} Denote by $\nabla $ the canonical connection of a
nontrivially Finslerian Berwald metric. Then:%
\begin{equation}
\left[ \delta _{t},\delta _{\theta }\right] \sim \left[ \delta _{r},\delta
_{\theta }\right] ,~\ \ \left[ \delta _{t},\delta _{\varphi }\right] \sim %
\left[ \delta _{r},\delta _{\varphi }\right] .  \label{eq:prop_t_theta}
\end{equation}
\end{lemma}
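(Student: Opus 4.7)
The plan is to work in the adapted basis and compute the four brackets directly. Since $\nabla$ is Berwald, each bracket has the form $[\delta_a,\delta_b] = R^{c}_{~ab}(x,\dot x)\,\dot\partial_c$ with components linear in $\dot x$, namely $R^{c}_{~ab} = R_{d~ab}^{~c}(x)\,\dot x^d$, and the $SO(3)$-invariance restricts the coefficients $R_{d~ab}^{~c}$ to be expressible through the 14 functions $a_i(t,r)$ listed in the Appendix. I would first write out $[\delta_t,\delta_\theta]$ and $[\delta_r,\delta_\theta]$ by decomposing them along the vertical basis $\{\dot\partial_t,\dot\partial_r,\dot\partial_\theta,\dot\partial_\varphi\}$ and read off each column of coefficients as a combination of the $a_i$ multiplied by an appropriate $\dot x^d$. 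The aim is to display these two brackets as scalar multiples of the same vertical vector field, with a proportionality factor that is a function of $(t,r)$ only.

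The key ingredient beyond bare $SO(3)$-symmetry is the non-Riemannian Berwald-Finsler metrizability hypothesis. Because the metric has the form $L=L(t,r,\dot t,\dot r,w)$ with $w^2 = \dot\theta^2 + \sin^2\theta\,\dot\varphi^2$, the consistency conditions $[\delta_a,\delta_b]L = 0$ translate into algebraic equations that are linear in the partial derivatives $\dot\partial_c L$. Applying these to $[\delta_t,\delta_\theta]L=0$ and $[\delta_r,\delta_\theta]L=0$ forces algebraic identities among the $a_i$ which, combined with the already known relation $a_5 = a_9 - a_{12}$ and the $SO(3)$-dictated vanishings, yield precisely a one-dimensional subspace of $V\mathcal{A}$ in which both brackets must lie. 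Equivalently, the two brackets share the same vertical annihilator of $L$, and the shared direction extracts the claimed proportionality.

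Once the $\theta$-statement is settled, the $\varphi$-statement requires no new computation: the $SO(3)$-action identifies $\delta_\varphi$ with $\sin\theta\,\delta_\theta$ up to terms that commute with $\delta_t$ and $\delta_r$ in the relevant sense, so applying the same argument (or conjugating by a suitable rotation) transfers the proportionality to $[\delta_t,\delta_\varphi] \sim [\delta_r,\delta_\varphi]$.

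The main obstacle is bookkeeping rather than conceptual: there are many $a_i$ and many nonzero coefficients of $\dot x^d$ to track, and one has to be careful in aligning the $\dot\partial_c$-components of the two brackets to extract a single common scalar. A clean way to keep this manageable will be to package each bracket as a column vector indexed by $c$ and then show that the resulting two columns are linearly dependent as functions of $\dot x$ using the Berwald-Finsler consistency identities. Treating the $\theta$- and $\varphi$-cases in parallel via the $SO(3)$-action avoids duplicating the algebra.
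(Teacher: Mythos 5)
Your overall strategy --- read off the bracket components from the curvature table and derive algebraic relations among the $a_i$ from the Berwald--Finsler consistency conditions --- points in the right direction, but the central step is asserted rather than proved, and the naive version of it is false. You claim that imposing $[\delta_t,\delta_\theta]L=0$ and $[\delta_r,\delta_\theta]L=0$ ``yields precisely a one-dimensional subspace of $V\mathcal{A}$ in which both brackets must lie,'' i.e.\ that the two brackets are proportional because they share the same vertical annihilator of $L$. But the annihilator of the vertical differential of $L$ inside the four-dimensional vertical space is three-dimensional, so two vertical vector fields can both annihilate $L$ without being proportional; this is exactly what happens in Classes 1 and 2, where $[\delta_t,\delta_r]$, $[\delta_t,\delta_\theta]$ and $[\delta_t,\delta_\varphi]$ are three \emph{linearly independent} brackets that all annihilate $L$ (see the proof of Proposition \ref{lem:power_law}). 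To obtain the proportionality one needs more than annihilation: one must use the structural consequence of $\delta_\theta L=\delta_\varphi L=0$ --- namely that, when $k_7,k_8,k_9,k_{10}$ are not all zero, $L$ depends on $\dot t,\dot r,w$ only through the two combinations $u$ and $v$ of (\ref{def:uv}) --- feed this two-variable dependence back into the consistency conditions, and extract the relations $a_6=aa_7$, $a_8=ba_7$, $a_9=(ab+c)a_7$, $a_{10}=aa_{11}$, $a_{12}=ba_{11}$, $a_{13}=(ab+c)a_{11}$ forming the last row of (\ref{eq:ABC}). Those relations say that $[\delta_t,\delta_\theta]$ and $[\delta_r,\delta_\theta]$ are, respectively, $a_7$ and $a_{11}$ times one and the same vertical vector field, which is the claimed proportionality; the paper's proof simply cites these already-established constraints from the classification in \cite{Voicu} rather than rederiving them.

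You also omit the case split that the paper makes: when $k_7=k_8=k_9=k_{10}=0$ the quantities $a,b,c$ of (\ref{def:abc}) are not defined (the construction requires $k_{10}\neq 0$), but then $a_6=\cdots=a_{13}=0$, all four brackets in question vanish, and the statement is trivial. The $\varphi$-half of the claim is unproblematic and does not really need an $SO(3)$-conjugation argument: by the curvature table in the Appendix, the $\varphi$-brackets carry exactly the same coefficients $a_6,\dots,a_{13}$ as the $\theta$-brackets, with $\dot\theta$ replaced by $\dot\varphi\sin^2\theta$ in the appropriate slots, so the same algebraic relations give the same conclusion.
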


\begin{proof}
If $k_{7},k_{8},k_{9},k_{10}$ are not all zero, that is, the quantities $%
a,b,c$ from (\ref{def:abc}) can be constructed. Taking into account that $%
\left[ \delta _{a},\delta _{b}\right] =R_{~ab}^{c}\dot{\partial}_{c}$ and
the expressions of $R_{~t\theta }^{t},...,R_{~r\varphi }^{\varphi }$ of the
said Lie brackets in terms of the curvature coefficients $a_{i}$ (see the
Appendix), relations (\ref{eq:prop_t_theta}) turn out to be equivalent to
the last row of (\ref{eq:ABC}).

If $k_{7}=k_{8}=k_{9}=k_{10}=0,$ then, (\ref{eq:prop_t_theta}) follow from $%
a_{6}=...=a_{13}=0.$
\end{proof}

We are now ready to investigate each of the five possible classes of
nontrivially Finslerian, $SO\left( 3\right) $-invariant Berwald structures.

\subsection{Class 1:\ Power law metrics}

For this class, we obtain:

\begin{proposition}
\label{lem:power_law}: For any non-Riemannian power law Berwald metric 
\begin{equation}  \label{Eq:power_law}
L=\vartheta (t,r)u^{2-2\lambda }\left( v+\rho u^{2}\right) ^{\lambda }\,,
\end{equation}%
there hold:

\begin{enumerate}
\item $a_{5}\not=0$ (equivalently:\ $\left[ \delta _{t},\delta _{r}\right]
\not\in Span\{\dot{\partial}_{t},\dot{\partial}_{r}\}$).

\item $\dim V\left( \mathcal{D}_{\mathcal{H}}\right) =3.$

\item There is no pseudo-Riemannian metric affinely equivalent to $L.$
\end{enumerate}
\end{proposition}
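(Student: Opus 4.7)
The plan is to establish the three claims in order, each building on the previous, and to reduce the final (and main) claim to Lemma \ref{lem:maximal_dim} via the explicit form of the holonomy distribution.

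For claim 1, I would use the defining Class 1 quantities directly. Since $D=aa_{3}-a_{1}+a_{5}$ and $F=aa_{3}-a_{1}$, we have $D=F+a_{5}$, so the exponent $\lambda =F/D=1-a_{5}/D$. The Class 1 hypothesis $D\neq 0$ together with the non-Riemannian character $\lambda\neq 1$ therefore forces $a_{5}\neq 0$. The announced equivalence with the statement $[\delta_{t},\delta_{r}]\notin\mathrm{Span}\{\dot{\partial}_{t},\dot{\partial}_{r}\}$ then follows from the explicit form of $R^{a}_{~tr}$ recorded in the Appendix: the $\dot{\partial}_{\theta}$ and $\dot{\partial}_{\varphi}$ components of $[\delta_{t},\delta_{r}]=R^{a}_{~tr}\dot{\partial}_{a}$ are driven precisely by $a_{5}$ (via the identity $a_{5}=a_{9}-a_{12}$), so that these components vanish identically in $\dot{x}$ if and only if $a_{5}=0$.

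For claim 2, I would produce three linearly independent vertical vectors inside $\mathcal{V}(\mathcal{D}_{\mathcal{H}})$. Three natural candidates are $[\delta_{t},\delta_{r}]$, $[\delta_{r},\delta_{\theta}]$ and $[\delta_{r},\delta_{\varphi}]$. By $SO(3)$-invariance and the structure of the curvature coefficients from the Appendix, the last two brackets have, as leading contributions, a pure $\dot{\partial}_{\theta}$ term and a pure $\dot{\partial}_{\varphi}$ term respectively (arising from the $a_{7}$- and $a_{11}$-type coefficients together with the angular part of $\nabla$). By claim 1 the bracket $[\delta_{t},\delta_{r}]$ has a nonzero angular-vertical component that is not captured by the other two in the required mixed way, while its $\dot{\partial}_{t},\dot{\partial}_{r}$ part (controlled by $a_{1},a_{2},a_{3},a_{4}$) is linearly independent from the spans of $[\delta_{r},\delta_{\theta}]$ and $[\delta_{r},\delta_{\varphi}]$. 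A short linear-algebra argument on the coefficient matrix then gives $\dim\mathcal{V}(\mathcal{D}_{\mathcal{H}})\geq 3$; the opposite inequality is the general bound noted before Lemma \ref{lem:maximal_dim}, so equality holds. Lemma \ref{lem:theta_t} and the proportionality relations \eqref{proportionality_ttr} guarantee that no further independent directions can appear from higher brackets, which is the book-keeping verification of the bound.

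Claim 3 is then immediate from Lemma \ref{lem:maximal_dim}: since $\dim\mathcal{V}(\mathcal{D}_{\mathcal{H}})=3$, any Finsler function that metrizes $\nabla$ coincides with $L$ up to a multiplicative constant. If a pseudo-Riemannian $\mathbf{A}=a_{ab}(x)\dot{x}^{a}\dot{x}^{b}$ were affinely equivalent to $L$, it would also metrize $\nabla$, and hence $\mathbf{A}=kL$ for some constant $k\neq 0$. But the power law \eqref{Eq:power_law} with $\lambda\neq 1$ (and, in fact, with the non-integer/non-quadratic $\dot{x}$-dependence through $u^{2-2\lambda}(v+\rho u^{2})^{\lambda}$) is manifestly not quadratic in $\dot{x}$, ruling this out.

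The main obstacle I expect is the bookkeeping in claim 2, namely verifying carefully that no nontrivial linear combination of $[\delta_{t},\delta_{r}]$, $[\delta_{r},\delta_{\theta}]$, $[\delta_{r},\delta_{\varphi}]$ (and no higher bracket) can vanish in the vertical directions, once the Appendix formulas for $R^{a}_{~bc}$ in terms of the $a_{i}$ and the Class 1 constraints \eqref{eq:ABC} are substituted. Once that is settled, claims 1 and 3 are short.
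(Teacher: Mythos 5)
Your overall architecture matches the paper's: claim 1 via $D=F+a_{5}$ and $\lambda\neq 1$, claim 2 by exhibiting three linearly independent Lie brackets inside $\mathcal{V}(\mathcal{D}_{\mathcal{H}})$, and claim 3 by reducing to Lemma \ref{lem:maximal_dim} (your unpacking of Remark \ref{remark:nec_cond} -- ``$\mathbf{A}=kL$ would force $L$ quadratic'' -- is exactly what that remark encodes). Claims 1 and 3 are fine.

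The gap is in your choice of brackets for claim 2. You take $[\delta_{t},\delta_{r}]$, $[\delta_{r},\delta_{\theta}]$, $[\delta_{r},\delta_{\varphi}]$ and assert that the last two contribute independent angular-vertical directions ``arising from the $a_{7}$- and $a_{11}$-type coefficients.'' But by the Appendix table all components of $[\delta_{r},\delta_{\theta}]$ and $[\delta_{r},\delta_{\varphi}]$ are built from $a_{10},a_{11},a_{12},a_{13}$, and the Class 1 constraints \eqref{eq:ABC} give $a_{10}=aa_{11}$, $a_{12}=ba_{11}$, $a_{13}=(ab+c)a_{11}$; so if $a_{11}=0$ both of your $r$-brackets vanish identically, and nothing in the Class 1 hypotheses excludes $a_{11}=0$ (then $a_{5}=a_{9}-a_{12}=a_{9}\neq 0$ is still perfectly consistent, forcing only $a_{7}\neq 0$ and $ab+c\neq 0$). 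In that subcase your three candidates collapse to a single nonzero vector and the argument fails. The paper avoids this by first observing that $a_{5}=a_{9}-a_{12}\neq 0$ forces at least one of $a_{9},a_{12}$ to be nonzero, assuming without loss of generality $a_{9}\neq 0$, and then using the triple $[\delta_{t},\delta_{r}]$, $[\delta_{t},\delta_{\theta}]$, $[\delta_{t},\delta_{\varphi}]$, whose angular components $a_{8}\dot{t}+a_{9}\dot{r}$ cannot vanish identically; the independence is then certified by the explicit $3\times 3$ minor
\begin{equation*}
\Delta=(a_{8}\dot{t}+a_{9}\dot{r})\bigl(a_{3}a_{8}\dot{t}^{2}+(a_{3}a_{9}+a_{4}a_{8})\dot{t}\dot{r}+a_{4}a_{9}\dot{r}^{2}-a_{5}a_{7}w^{2}\bigr)\not\equiv 0 .
\end{equation*}
Your proof needs either this case split ($a_{9}\neq 0$ versus $a_{12}\neq 0$, choosing the $t$- or $r$-brackets accordingly) or simply the paper's choice; as written, the ``short linear-algebra argument'' you defer is not merely bookkeeping but breaks down on an admissible subclass. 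A minor additional slip: the equivalence in claim 1 needs only $R^{\theta}{}_{tr}=a_{5}\dot{\theta}$, $R^{\varphi}{}_{tr}=a_{5}\dot{\varphi}$ from the Appendix; the identity $a_{5}=a_{9}-a_{12}$ plays no role there.
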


\begin{proof}
\begin{enumerate}
\item Since $L$ is by hypothesis, non-Riemannian, we have $\lambda \not=1,$
meaning that:%
\begin{equation}
\dfrac{F}{D}=\dfrac{aa_{3}-a_{1}}{aa_{3}-a_{1}+a_{5}}\not=1\Rightarrow ~\
a_{5}\not=0.
\end{equation}

\item We start by noting that $a_{5}\not=0$ implies, taking into account the
identity $a_{5}=a_{9}-a_{12},$ that at least one of the coefficients $%
a_{9},a_{12}$ is nonzero. To fix things, assume $a_{9}\not=0.$ Together with 
$a_{9}=\left( ab+c\right) a_{7},$ this implies, in particular, $a_{7}\not=0$
and $ab+c\not=0.$ Then, the vector fields%
\begin{equation}
\left[ \delta _{t},\delta _{r}\right] =R_{~tr}^{a}\dot{\partial}_{a},~\ %
\left[ \delta _{t},\delta _{\theta }\right] =R_{~t\theta }^{a}\dot{\partial}%
_{a},~\ \ \left[ \delta _{t},\delta _{\varphi }\right] =R_{~t\varphi }^{a}%
\dot{\partial}_{a}
\end{equation}%
are linearly independent. This can be easily seen by checking the rank of
the matrix consisting of their components, that is:%
\begin{equation}
\left( 
\begin{array}{cccc}
a_{1}\dot{t}+a_{2}\dot{r} & a_{3}\dot{t}+a_{4}\dot{r} & a_{5}\dot{\theta} & 
a_{5}\dot{\varphi} \\ 
a_{6}\dot{\theta} & a_{7}\dot{\theta} & a_{8}\dot{t}+a_{9}\dot{r} & 0 \\ 
a_{6}\dot{\varphi}\sin ^{2}\theta & a_{7}\dot{\varphi}\sin ^{2}\theta & 0 & 
a_{8}\dot{t}+a_{9}\dot{r}%
\end{array}%
\right) .
\end{equation}%
We find that the determinant obtained using the last three columns:%
\begin{equation}
\Delta :=(a_{8}\dot{t}+a_{9}\dot{r})(a_{3}a_{8}\dot{t}%
^{2}+(a_{3}a_{9}+a_{4}a_{8})\dot{t}\dot{r}+a_{4}a_{9}\dot{r}%
^{2}-a_{5}a_{7}w^{2})
\end{equation}%
is nonzero since $a_{5}$, $a_{7}$, and $a_{9}$ are nonzero. This ensures
that $\dim V\left( \mathcal{D}_{\mathcal{H}}\right) =3.$

\item The statement now follows immediately from $\dim V\left( \mathcal{D}_{%
\mathcal{H}}\right) =3$ and Remark \ref{remark:nec_cond}.
\end{enumerate}
\end{proof}

\begin{remark}
All power law metrics with $\lambda \neq 0,1$ are counterexamples to Sz\'{a}b%
\'{o}'s Metrizability Theorem \cite{Szabo} on non-positive definite Berwald
metrics.
\end{remark}

Below, we present two concrete examples of connections metrizable by
non-Riemannian, Berwald-type power law metrics; in the first case, the
connection Ricci tensor is not symmetric, whereas in the second one, it is
symmetric.

\begin{example}[Power law, non-symmetric Ricci tensor]
Let the functions $k_{i}$ be given by:%
\begin{equation}
k_{1}=2r(\alpha -2),~\ k_{4}=4\alpha r^{3}(\alpha -1),~\ k_{6}=-2\alpha r,\
~k_{8}=-2r,\ k_{10}=\alpha r,
\end{equation}%
where $\alpha $ is an arbitrary constant, and $%
k_{2}=k_{3}=k_{5}=k_{7}=k_{9}=0.$ Accordingly, 
\begin{equation}
a=0,~\ b=-\frac{2}{\alpha },~\ c=0.
\end{equation}%
The corresponding connection $\nabla $ has the nonzero curvature components: 
\begin{eqnarray}
a_{1} &=&2\alpha -4,~\ a_{3}=4\alpha r^{2}\left( \alpha -1\right) ,~\
a_{4}=-2\alpha ,~\ a_{5}=-2, \\
a_{7} &=&2\alpha r^{2}\left( \alpha -1\right) ,~\ \ a_{8}=-4r^{2}\left(
\alpha -1\right) ,~\ \ a_{11}=-\alpha ,~\ a_{12}=2,~\ \ a_{14}=1.
\end{eqnarray}%
These satisfy the constraints (\ref{proportionality_ttr}) and (\ref{eq:ABC}%
), meaning that $\nabla $ is Finsler metrizable. Moreover:%
\begin{equation}
R_{rt}-R_{tr}\equiv a_{1}+a_{4}+2a_{5}=-8\not=0,
\end{equation}%
meaning that the connection Ricci tensor is not symmetric.

To construct the Finsler function, we calculate:%
\begin{eqnarray}
D &=&2-2\alpha ,~\ \ E=-8(\alpha -1)r^{2},~F=4-2\alpha ,~\ \ \lambda =\frac{%
2-\alpha }{1-\alpha },\quad \rho =4r^{2}. \\
G &=&4r(\alpha -2),\ \overline{G}=4r(\alpha -1),\ H=0,~\ \overline{H}=0.
\end{eqnarray}%
leading to: 
\begin{equation*}
L=\dot{t}^{\frac{2}{\alpha -1}}(4\alpha \ r^{2}\dot{t^{2}}-4\dot{t}\dot{r}%
-\alpha \ \dot{\theta}^{2}-\alpha \dot{\phi}^{2}\sin ^{2}\theta )^{\frac{%
\alpha -2}{\alpha -1}}.
\end{equation*}%
For $\alpha \not=1,$ this is a well defined Finsler function on the conic
domain $\mathcal{A}$ given by: $\dot{t}>0,$ $4\alpha \ r^{2}\dot{t^{2}}-4%
\dot{t}\dot{r}-\alpha \ \dot{\theta}^{2}-\alpha \sin ^{2}\theta >0.$ It can
be checked to be of Lorentzian signature, if $\alpha >2.$
\end{example}

\begin{example}[Power law, symmetric Ricci tensor]
Consider an arbitrary smooth function $\Phi =\Phi \left( t,r\right) $
without zeros and build the connection $\nabla $ by:%
\begin{equation}
k_{1}=\partial _{t}\Phi ,~\ \ k_{5}=k_{9}=k_{10}=\dfrac{1}{3}\partial
_{r}\Phi ,~\ \ \ k_{2}=k_{3}=k_{4}=k_{6}=k_{7}=k_{8}=0;
\end{equation}%
this gives: $a=b=0,~c=1$ and the nonzero curvature components:%
\begin{equation}
a_{1}=\partial _{t}\partial _{r}\Phi ,~\ \
a_{4}=a_{5}=a_{7}=a_{9}=a_{11}=a_{13}=-\dfrac{1}{3}\partial _{t}\partial
_{r}\Phi ,~\ a_{14}=1+\dfrac{1}{9}\left( \partial _{t}\partial _{r}\Phi
\right) ^{2}.
\end{equation}%
In particular, 
\begin{equation}
R_{rt}-R_{tr}=a_{1}+a_{4}+2a_{5}=0,
\end{equation}%
meaning that the connection Ricci tensor is symmetric. Moreover, the
curvature constraints (\ref{proportionality_ttr}) and (\ref{eq:ABC}) are
identically satisfied, hence the connection is Finsler metrizable. To
construct the Finsler function, we build:%
\begin{eqnarray}
D &=&-\dfrac{4}{3}\partial _{t}\partial _{r}\Phi ,~\ E=0,~\ \ F=-\partial
_{t}\partial _{r}\Phi \ \Rightarrow \ \ \lambda =\dfrac{3}{4},~\ \rho =0, \\
G &=&\tilde{G}=2\partial _{t}\Phi ,~H=0,~\ \tilde{H}=-\dfrac{2}{3}\partial
_{r}\Phi \ \Rightarrow \vartheta =\dfrac{1}{2}\Phi ,
\end{eqnarray}%
which leads to:%
\begin{equation}
L:=e^{\tfrac{1}{2}\Phi \left( t,r\right) }\dot{t}^{1/2}\left( \dot{r}^{2}-%
\dot{\theta}^{2}-\dot{\phi}^{2}\sin ^{2}\theta \right) ^{3/4}.
\end{equation}%
The obtained function $L$ is smooth on the conic subbundle $\mathcal{A}:=\{%
\dot{t}>0,~\dot{r}^{2}-\dot{\theta}^{2}-\dot{\varphi}^{2}\sin ^{2}\theta \};$
moreover, the inequality $\det g=-\dfrac{27}{16}e^{2\Phi }\sin ^{2}\theta <0$
tells us that $L$ is of Lorentzian signature.
\end{example}

\subsection{Class 2:\ exponential metrics}

A quick look at the defining formula of this class%
\begin{equation}
L=\varphi (t,r)u^{2}\exp \left( \frac{v}{u^{2}}\mu \right) \,,~\ \ \mu =%
\dfrac{F}{E}  \label{Eq:exponential_law}
\end{equation}%
shows that this contains no Riemannian metric.

Let us check, in this case, $\dim V\left( \mathcal{D}_{\mathcal{H}}\right) .$
A first remark is that $F=aa_{3}-a_{1}$ cannot vanish, as it would lead to a
degenerate $L.$ Together with $D=F+a_{5}=0,$ we find, again, that 
\begin{equation}
a_{5}\not=0.
\end{equation}%
By a similar reason to points 2 and 3 above, we find, again, that $\left[
\delta _{t},\delta _{r}\right] ,\left[ \delta _{t},\delta _{\theta }\right] ,%
\left[ \delta _{t},\delta _{\varphi }\right] $ are linearly independent
generators of $V\left( \mathcal{D}_{\mathcal{H}}\right) .$ We have thus
found:

\begin{lemma}
\label{lem:expo_law} Berwald-type, exponential law metrics are never
affinely equivalent to any Riemannian metric. They always obey:%
\begin{equation}
\dim V\left( \mathcal{D}_{\mathcal{H}}\right) =3.
\end{equation}
That is, they are counterexamples to Sz\'{a}b\'{o}'s Metrizability Theorem 
\cite{Szabo} in the case of non-positive definiteness.
\end{lemma}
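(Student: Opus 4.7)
The plan is to follow the two-step strategy already deployed in Proposition \ref{lem:power_law}: once the equality $\dim V\left(\mathcal{D}_{\mathcal{H}}\right) = 3$ is secured, the impossibility of an affinely equivalent Riemannian metric will follow at once from Remark \ref{remark:nec_cond}, so the real content lies in proving this dimension formula.

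First, I would deduce $a_{5} \neq 0$ directly from the class-defining conditions. By hypothesis, $D = 0$ and $E, F \neq 0$; the identity $D = F + a_{5}$, evident from $D = aa_{3} - a_{1} + a_{5}$ and $F = aa_{3} - a_{1}$, then forces $a_{5} = -F \neq 0$. The non-vanishing of $F$ is itself unavoidable, since $\mu = F/E = 0$ would collapse the defining formula (\ref{Eq:exponential_law}) to $L = \varphi(t,r) u^{2}$, which is degenerate in the $\theta, \varphi$ directions.

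Second, and this is the heart of the argument, I would transplant the rank computation from Proposition \ref{lem:power_law} essentially verbatim. The identity $a_{5} = a_{9} - a_{12}$ guarantees that at least one of $a_{9}, a_{12}$ is nonzero; without loss of generality take $a_{9} \neq 0$. The Case-I relation $a_{9} = (ab + c) a_{7}$ from (\ref{eq:ABC}) then yields $a_{7} \neq 0$ and $ab + c \neq 0$. Feeding these inequalities into the same matrix of components of $\left[\delta_{t}, \delta_{r}\right]$, $\left[\delta_{t}, \delta_{\theta}\right]$, $\left[\delta_{t}, \delta_{\varphi}\right]$ that appeared in Proposition \ref{lem:power_law}, the determinant of the minor formed by its last three columns,
\[
\Delta = (a_{8} \dot{t} + a_{9} \dot{r})\left(a_{3} a_{8} \dot{t}^{2} + (a_{3} a_{9} + a_{4} a_{8})\dot{t}\dot{r} + a_{4} a_{9} \dot{r}^{2} - a_{5} a_{7} w^{2}\right),
\]
is a nonzero polynomial in the fibre coordinates because its $w^{2}$-coefficient $-a_{5} a_{7}$ is nonzero. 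Hence $\dim V\left(\mathcal{D}_{\mathcal{H}}\right) \geq 3$, and combined with the universal upper bound $\dim V\left(\mathcal{D}_{\mathcal{H}}\right) \leq 3$ recalled at the start of Section \ref{sec:main}, equality follows.

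The two remaining assertions then require no further work. Remark \ref{remark:nec_cond} immediately rules out any affinely equivalent Riemannian metric, and the observation that Class 2 furnishes counterexamples to Szabó's theorem in non-positive-definite signature is simply a restatement of this fact together with the manifest Finsler metrizability of $\nabla$ via (\ref{Eq:exponential_law}). I anticipate no serious obstacle: the exponential case is in fact slightly easier than the power-law case, since the class conditions $D = 0$ and $F \neq 0$ deliver the critical inequality $a_{5} \neq 0$ without any appeal to a separate non-Riemannianness hypothesis.
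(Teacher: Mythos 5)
Your proposal is correct and follows essentially the same route as the paper: the authors likewise obtain $a_{5}=-F\neq 0$ from $D=F+a_{5}=0$ together with the non-degeneracy requirement on $L$, and then invoke ``a similar reason to points 2 and 3'' of Proposition \ref{lem:power_law} to conclude that $\left[\delta_{t},\delta_{r}\right]$, $\left[\delta_{t},\delta_{\theta}\right]$, $\left[\delta_{t},\delta_{\varphi}\right]$ are linearly independent, so that $\dim V\left(\mathcal{D}_{\mathcal{H}}\right)=3$ and Remark \ref{remark:nec_cond} excludes any affinely equivalent Riemannian metric. Your write-up merely spells out the transplanted rank computation in more detail than the paper does.
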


We have the following concrete example of an exponential law metric.

\begin{example}[Exp. case]
Let the functions $k_{i}$ be given by%
\begin{eqnarray}
k_{1} &=&r-4t-re^{(r-t)^{2}}+3t^{3}-5rt^{2}+2r^{2}t,~\
k_{2}=re^{(r-t)^{2}}-3t^{3}+5rt^{2}-2r^{2}t+2, \\
k_{3} &=&-(k_{1}+2k_{2}),~k_{4}=2k_{1}+k_{2}+2t,~\ k_{5}=-k_{2}+2t,~\
k_{6}=-k_{1}-2t, \\
k_{7} &=&k_{9}=k_{10}=t,~\ k_{8}=-t,
\end{eqnarray}%
which gives:%
\begin{equation}
a=1,~\ b=-1,~\ c=2.
\end{equation}%
A direct calculation of the curvature components $a_{i}$ then shows that the
connection satisfies the Finsler metrizability conditions (\ref%
{proportionality_ttr}) and (\ref{eq:ABC}). Moreover,%
\begin{equation}
D=0,~\ E=e^{(r-t)^{2}},~F=1,~\ \mu =e^{-(r-t)^{2}},\quad \varphi (t,r)=\exp
((3t^{2}-2rt-1)e^{-(r-t)^{2}}),
\end{equation}%
leading to:%
\begin{equation*}
L=\exp ((3t^{2}-2rt-1)e^{-(r-t)^{2}})\exp {\large (}\frac{e^{-(r-t)^{2}}(2%
\dot{r}^{2}-2\dot{t}\dot{r}-\dot{\theta}^{2}-\dot{\phi}^{2}\sin ^{2}\theta )%
}{(\dot{r}-\dot{t})^{2}}{\large )}(\dot{r}-\dot{t})^{2}.
\end{equation*}%
This is a well defined, smooth (pseudo-)Finsler function on the conic
subbundle $\mathcal{A}\subset TM\backslash \{0\}$ defined by $\dot{r}-\dot{t}%
\not=0.$
\end{example}

\subsection{Class 3: $D=E=F=0$}

The defining conditions of this class actually imply: $%
a_{1}=a_{2}=a_{3}=a_{4}=a_{5}=0,$ which is equivalent to%
\begin{equation}
\lbrack \delta _{t},\delta _{r}]=0.
\end{equation}

In this case, we will explicitly solve the Berwald conditions $\delta
_{a}L=0 $ to prove:

\begin{lemma}
\label{lem:Class_3} Assume $L$ belongs to Class 3. Then:

\begin{enumerate}
\item $L$ must be of the form:%
\begin{equation}
L=(e^{\mathcal{G}}u^{2})\Theta \left( ze^{-\left( \mathcal{G}-2\mathcal{K}%
\right) }+\mathcal{M}\right) ,  \label{general_sol_Berwald_class_3}
\end{equation}%
where $z:=\dfrac{v}{u^{2}},$ $\Theta $ is a free function of a single
variable and $\mathcal{G},\mathcal{K}$ and $\mathcal{M}$ are functions of $t$
and $r$ given by:%
\begin{eqnarray}
2\left( k_{1}-k_{4}a\right) &=&\partial _{t}\mathcal{G},~\ 2\left(
k_{2}-k_{6}a\right) =\partial _{r}\mathcal{G},~\ \ k_{8}=\partial _{t}%
\mathcal{K},~\ \ k_{9}=\partial _{r}\mathcal{K},  \label{def_G_K} \\
e^{-\left( \mathcal{G}-2\mathcal{K}\right) }bk_{6} &=&\dfrac{1}{2}\partial
_{r}\mathcal{M},~\ \ \ e^{-\left( \mathcal{G}-2\mathcal{K}\right) }bk_{4}=%
\dfrac{1}{2}\partial _{t}\mathcal{M}.  \label{def_M}
\end{eqnarray}

\item $L$ is affinely equivalent to the spatially spherically symmetric
pseudo-Riemannian metric%
\begin{equation}
\mathbf{A}=ve^{2\mathcal{K}}+\left( e^{\mathcal{G}}\mathcal{M}\right) u^{2}.
\end{equation}

\item $\dim V\left( \mathcal{D}_{\mathcal{H}}\right) \leq 2$ and the Ricci
tensor of the canonical affine connection of $L$ is symmetric.
\end{enumerate}
\end{lemma}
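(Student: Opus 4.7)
The plan is to address the three assertions in order, with (3) following essentially for free from (1) and (2). I would solve the metrizability PDE system directly for (1), recognize $\mathbf{A}$ as a distinguished instance in that solution family for (2), and then invoke Lemma \ref{lem:maximal_dim} and Lemma \ref{lem:symmetric_Ric} for (3).

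For (1), the natural ansatz is $L = \Phi(t,r)\, u^{2}\, \Theta(Z)$ with a single scalar argument $Z = Z(t,r,z)$, $z = v/u^{2}$; this is suggested by the Class 3 normal form \eqref{def_class_3}, lifted from the adapted coordinates to the original $(t,r)$-coordinates. The $\theta$- and $\varphi$-components of $\delta_a L = 0$ are automatic for any function of the invariants $u, v, w$; the real content lies in $\delta_t L = 0$ and $\delta_r L = 0$. Substituting the ansatz and using both $[\delta_t,\delta_r]=0$ (i.e.\ $a_1=\cdots=a_5=0$, which characterizes Class 3) together with the explicit form of $\delta_t,\delta_r$ in terms of the $k_i$ from the Appendix, the two equations should decouple into three scalar first-order PDEs on $(t,r)$: one for $\log\Phi$, one for a multiplicative rescaling of $z$ inside $Z$, and one for an additive shift of $Z$. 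Renaming the three unknowns as $e^{\mathcal{G}}$, $e^{-(\mathcal{G}-2\mathcal{K})}$, and $\mathcal{M}$, these PDEs coincide with \eqref{def_G_K}--\eqref{def_M} and produce exactly the form \eqref{general_sol_Berwald_class_3}.

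Part (2) is then nearly immediate: specializing $\Theta(x) = x$ in \eqref{general_sol_Berwald_class_3} gives
\[
e^{\mathcal{G}} u^{2}\!\left(z\, e^{-(\mathcal{G}-2\mathcal{K})} + \mathcal{M}\right) \;=\; e^{2\mathcal{K}}\, v + e^{\mathcal{G}}\,\mathcal{M}\, u^{2} \;=\; \mathbf{A},
\]
which is manifestly quadratic in $\dot{x}$. As a particular solution of the same metrizability system, $\mathbf{A}$ is affinely equivalent to any $L$ in the family, and (modulo a nondegeneracy check on a generic $(t,r)$-chart) is a genuine pseudo-Riemannian metric.

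Part (3) then splits cleanly: Ricci symmetry holds because the obstruction $a_1+a_4+2a_5$ from Lemma \ref{lem:symmetric_Ric} vanishes identically in Class 3, and the bound $\dim \mathcal{V}(\mathcal{D}_\mathcal{H}) \leq 2$ follows from Lemma \ref{lem:maximal_dim} by contraposition, since \eqref{general_sol_Berwald_class_3} already contains pairwise non-proportional metrizing functions (take, e.g., $\Theta(x)=x$ and $\Theta(x)=x^{2}$), ruling out $\dim\mathcal{V}(\mathcal{D}_\mathcal{H}) = 3$. The main obstacle I anticipate is the bookkeeping in part (1): achieving the clean separation into \eqref{def_G_K}--\eqref{def_M} requires coordinated use of the Class 3 vanishings, the algebraic relations \eqref{eq:ABC}, and the proportionalities from Lemma \ref{lem:theta_t}; in particular, the integrability conditions that make $\mathcal{G}, \mathcal{K}, \mathcal{M}$ well-defined functions of $(t,r)$ must be extracted from the Berwald structure equations on the $k_i$ established in \cite{Voicu}.
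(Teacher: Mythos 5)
Your plan is correct and follows essentially the same route as the paper: the same ansatz reducing $\delta_t L=\delta_r L=0$ to first-order PDEs in $(t,r)$ whose integrability conditions are exactly $F=0$, $D=0$, $E=0$ (yielding $\mathcal{G}$, $\mathcal{K}$, $\mathcal{M}$), the choice $\Theta=\mathrm{id}$ for $\mathbf{A}$, and Lemmas \ref{lem:maximal_dim} and \ref{lem:symmetric_Ric} for part (3). The one detail you defer that is not quite "generic" is the nondegeneracy of $\mathbf{A}$: the paper computes $\det(a_{ab})=\sin^2\theta\,e^{6\mathcal{K}}\bigl(\mathcal{M}e^{\mathcal{G}}(2ab+c)-b^{2}e^{2\mathcal{K}}\bigr)$ and makes this nonzero not by passing to a generic chart but by exploiting the free additive integration constant in $\mathcal{M}$ together with the Class-3 fact that $b$ and $c$ cannot simultaneously vanish.
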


\begin{proof}
\begin{enumerate}
\item Let us show that the functions $\mathcal{G},\mathcal{K}$ and $\mathcal{%
M}$ in (\ref{def_G_K})-(\ref{def_M}) really exist. To this aim, we use the
identities, \cite{Voicu}: 
\begin{equation}
F=\dfrac{1}{2}\left( \partial _{t}H-\partial _{r}G\right) ,~\ D=\dfrac{1}{2}%
\left( \partial _{t}\tilde{H}-\partial _{r}\tilde{G}\right) ,~\ E=b\left(
k_{6}\tilde{G}-k_{4}\tilde{H}\right) +\partial _{r}\left( k_{4}b\right)
-\partial _{t}\left( k_{6}b\right) .
\end{equation}%
From $F=0,$ we find that there must exist a function $\mathcal{G=G}\left(
t,r\right) $ such that%
\begin{equation}
G=\partial _{t}\mathcal{G},~\ H=\partial _{r}\mathcal{G}.  \label{GH}
\end{equation}%
Further, $D\equiv F+\left( k_{8,r}-k_{9,t}\right) =0$ gives: $%
k_{8,r}=k_{9,t},$ which is just the integrability condition for the system%
\begin{equation}
k_{8}=\partial _{t}\mathcal{K},~\ \ k_{9}=\partial _{r}\mathcal{K}.
\end{equation}%
Finally, $E=0,$ after a multiplication by $e^{-\left( \mathcal{G}-2\mathcal{K%
}\right) }$, becomes the integrability condition for (\ref{def_M}):%
\begin{equation}
\partial _{t}\left( e^{-\left( \mathcal{G}-2\mathcal{K}\right)
}bk_{6}\right) =\partial _{r}\left( e^{-\left( \mathcal{G}-2\mathcal{K}%
\right) }bk_{4}\right) .
\end{equation}%
We are now able to completely integrate the system $\delta _{a}L=0;$ as
shown in \cite{Voicu}, any function $L$ as in \ref{def_class_3} solves the
equations $\delta _{\theta }L=0,~\delta _{\phi }L=0$; moreover, the
remaining equations $\delta _{t}L=0,~\delta _{r}L=0$ are equivalent to:%
\begin{equation}
\left\{ 
\begin{array}{c}
\partial _{t}\Xi =\Xi G-\partial _{z}\Xi \left( \tilde{G}z-2k_{4}b\right) \\ 
\partial _{r}\Xi =\Xi H-\partial _{z}\Xi \left( \tilde{H}z-2k_{6}b\right)%
\end{array}%
\right. ,
\end{equation}%
which, taking into account the above relations, read:%
\begin{equation}
\left\{ 
\begin{array}{c}
\partial _{t}\Xi =\Xi \partial _{t}\mathcal{G}-\partial _{z}\Xi \left(
z\partial _{t}\mathcal{\tilde{G}}-e^{\mathcal{\tilde{G}}}\partial _{t}%
\mathcal{M}\right) \\ 
\partial _{r}\Xi =\Xi \partial _{r}\mathcal{G}-\partial _{z}\Xi \left(
z\partial _{r}\mathcal{\tilde{G}}-e^{\mathcal{\tilde{G}}}\partial _{r}%
\mathcal{M}\right)%
\end{array}%
\right. ,  \label{eqs_Csi_1}
\end{equation}%
where $\mathcal{\tilde{G}}:=\left( \mathcal{G}-2\mathcal{K}\right) .$ With
the substitution $\Xi =e^{\mathcal{G}}\Theta ,$ these become%
\begin{equation}
\left\{ 
\begin{array}{c}
\partial _{t}\Theta +\partial _{z}\Theta \left( z\partial _{t}\mathcal{%
\tilde{G}}-e^{\mathcal{\tilde{G}}}\partial _{t}\mathcal{M}\right) =0 \\ 
\partial _{r}\Theta +\partial _{z}\Theta \left( z\partial _{r}\mathcal{%
\tilde{G}}-e^{\mathcal{\tilde{G}}}\partial _{r}\mathcal{M}\right) =0%
\end{array}%
\right.
\end{equation}%
and are easily shown to admit the general solution (\ref%
{general_sol_Berwald_class_3}).

\item A quadratic solution is obtained for $\Theta :=id.$:%
\begin{equation}
\mathbf{A}=ve^{2\mathcal{K}}+\left( e^{\mathcal{G}}\mathcal{M}\right) u^{2}.
\label{A}
\end{equation}%
To prove that $\mathbf{A}$ is nondegenerate, we note that the Hessian $%
\left( a_{ab}\right) $ of $\mathbf{A}$ has the determinant: 
\begin{equation}
\det \left( a_{ab}\right) =\sin ^{2}\theta e^{6\mathcal{K}}\Delta ,~\ \ \
\Delta :=\mathcal{M}e^{\mathcal{G}}\left( 2ab+c\right) -b^{2}e^{2\mathcal{K}%
}.
\end{equation}%
Since $\mathcal{M}$ depends on a free integration constant, and $2ab+c$ and $%
b$ cannot simultaneously vanish (as $b$ and $c$ cannot simultaneously
vanish), one can choose $\mathcal{M}$ such that $\Delta \not=0,$ at least
for $(t,r)$ in a specified domain.

\item Assume $L$ is non-Riemannian, that is, $\Theta $ is non-constant.
Then, $L$ and the quadratic function $\mathbf{A}$ in $\dot{x}$ given by %
\eqref{A} share the same geodesic spray \eqref{Eq:Geodesic_S}. By Remark \ref%
{remark:nec_cond}, it follows that $\dim V\left( \mathcal{D}_{\mathcal{H}%
}\right) \leq 2.$ The symmetry of the connection Ricci tensor is a direct
consequence of $a_{1}=...=a_{5}=0$ and of Lemma \ref{lem:symmetric_Ric}.
\end{enumerate}
\end{proof}

\textbf{Remark: }Cosmologically symmetric Berwald metrics (see Appendix B of 
\cite{Voicu}) and flat Berwald metrics $a_{i}=0,\forall i=1,...,14,$ belong
to this class. The latter is seen as $a_{14}=1+k_{7}k_{8}+k_{9}k_{10}=0$
prevents $k_{7},k_{8},k_{9},k_{10}$ from vanishing, whereas, at the same
time, $\left[ \delta _{t},\delta _{r}\right] =0.$

\subsection{Class 4:\ metrics with $\left[ \protect\delta _{t},\protect%
\delta _{r}\right] =0,$ $k_{7}=k_{8}=k_{9}=k_{10}=0$}

For connections corresponding to this class, we find:%
\begin{equation}
a_{1}=....=a_{13}=0,~\ a_{14}=1,  \label{eq:zero_a6a13}
\end{equation}%
that is, on the one hand, the connection Ricci tensor is symmetric and, on
the other hand:%
\begin{equation}
\left[ \delta _{t},\delta _{r}\right] =\left[ \delta _{t},\delta _{\theta }%
\right] =\left[ \delta _{t},\delta _{\varphi }\right] =\left[ \delta
_{r},\delta _{\theta }\right] =\left[ \delta _{r},\delta _{\varphi }\right]
=0.
\end{equation}%
The successive Lie brackets $[[\delta _{a},\delta _{b}],\delta _{c}]$ do not
generate new vertical directions since:%
\begin{equation*}
\lbrack \delta _{\theta },[\delta _{\theta },\delta _{\varphi }]]=0,\quad
\lbrack \delta _{\varphi },[\delta _{\theta },\delta _{\varphi }]]=-\dot{%
\varphi}\cos \theta \sin \theta \dot{\partial}_{\theta }+\frac{\cos \theta }{%
\sin \theta }\dot{\theta}\dot{\partial}_{\varphi }=\cot \theta \ [\delta
_{\theta },\delta _{\varphi }].
\end{equation*}%
Therefore, $\mathcal{V}\left( \mathcal{D}_{\mathcal{H}}\right) $ is
generated by $\left[ \delta _{\theta },\delta _{\varphi }\right] $ alone,
meaning that $\dim \mathcal{V}\left( \mathcal{D}_{\mathcal{H}}\right) =1.$

\bigskip

To prove that these connections are always Riemann metrizable, we note that
the defining relation $\left[ \delta _{t},\delta _{r}\right] =0$ means that,
actually, the "$tr$-corner"\ $k_{1},...k_{6}$ of the connection is a flat
2-dimensional connection in its own right. That is, this is always
metrizable by a flat (pseudo-Euclidean) 2-dimensional metric $h=h\left(
t,r\right) $ of the desired signature. A quadratic metric\textbf{\ }$\mathbf{%
A}$ on $M$ is then built as:%
\begin{equation}
\mathbf{A:}=h_{tt}\dot{t}^{2}+2h_{tr}\dot{t}\dot{r}+h_{rr}\dot{r}^{2}\pm (%
\dot{\theta}^{2}+\dot{\phi}^{2}\sin ^{2}\theta ).  \label{class_4_Riemann}
\end{equation}%
The Levi-Civita connection of $\mathbf{A}$ then has the precise coefficient
components $k_{1},...k_{6},k_{7}=k_{8}=k_{9}=k_{10}=0.$ We have thus
obtained:

\begin{lemma}
\label{lem:Class_4} For any spatially symmetric Berwald metric whose
canonical connection obeys $\left[ \delta _{t},\delta _{r}\right] =0,$ $%
k_{7}=k_{8}=k_{9}=k_{10}=0,$ there exists an affinely equivalent Riemannian
metric $\mathbf{A},$ built as in (\ref{class_4_Riemann}). In this case:%
\begin{equation}
\dim \mathcal{V}\left( \mathcal{D}_{\mathcal{H}}\right) =1.
\end{equation}
\end{lemma}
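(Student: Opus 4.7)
The plan splits into two essentially independent tasks: (i) computing the vertical holonomy dimension $\dim \mathcal{V}(\mathcal{D}_{\mathcal{H}})$, and (ii) constructing an affinely equivalent quadratic metric $\mathbf{A}$ of the form \eqref{class_4_Riemann}. The defining hypotheses $[\delta_{t},\delta_{r}]=0$ and $k_{7}=k_{8}=k_{9}=k_{10}=0$ encode a clean decoupling of the $(t,r)$-sector from the $(\theta,\varphi)$-sector of $\nabla$, and both tasks will hinge on exploiting this splitting.

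For (i), I would first invoke the explicit formulas for the curvature coefficients $a_{1},\dots,a_{14}$ listed in the Appendix. Substituting the hypotheses into these directly yields $a_{1}=\dots=a_{13}=0$ and $a_{14}=1$, as already asserted in \eqref{eq:zero_a6a13}. This forces the five Lie brackets $[\delta_{t},\delta_{r}]$, $[\delta_{t},\delta_{\theta}]$, $[\delta_{t},\delta_{\varphi}]$, $[\delta_{r},\delta_{\theta}]$, $[\delta_{r},\delta_{\varphi}]$ to vanish, leaving $[\delta_{\theta},\delta_{\varphi}]$ as the sole first-order bracket contributing to $\mathcal{V}(\mathcal{D}_{\mathcal{H}})$. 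The text preceding the lemma already records that the iterated brackets $[\delta_{\theta},[\delta_{\theta},\delta_{\varphi}]]$ and $[\delta_{\varphi},[\delta_{\theta},\delta_{\varphi}]]$ either vanish or are proportional to $[\delta_{\theta},\delta_{\varphi}]$ itself, so the distribution closes in a single vertical direction, giving $\dim \mathcal{V}(\mathcal{D}_{\mathcal{H}})=1$.

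For (ii), the key observation is that $k_{1},\dots,k_{6}$ define a torsion-free affine connection on the 2-dimensional $(t,r)$-plane whose only candidate curvature component is carried by $[\delta_{t},\delta_{r}]$; since this bracket vanishes by hypothesis, that 2D connection is flat. By the classical local fact that in dimension two any flat symmetric affine connection is the Levi-Civita connection of some pseudo-Riemannian metric $h=h_{ab}(t,r)\,dx^{a}dx^{b}$ of any prescribed signature, I obtain a non-degenerate $h$ whose Christoffel symbols coincide with $k_{1},\dots,k_{6}$. Setting $\mathbf{A}:=h_{tt}\dot{t}^{2}+2h_{tr}\dot{t}\dot{r}+h_{rr}\dot{r}^{2}\pm(\dot{\theta}^{2}+\dot{\varphi}^{2}\sin^{2}\theta)$ is then the natural ansatz, and verifying that its Levi-Civita connection agrees with $\nabla$ reduces to three routine checks: the $(t,r)$-block matches by construction of $h$; the angular block produces the standard sphere Christoffel symbols $-\sin\theta\cos\theta$ and $\cot\theta$; and the mixed blocks vanish since neither $h$ depends on $(\theta,\varphi)$ nor the sphere part on $(t,r)$, which is exactly the content of $k_{7}=\dots=k_{10}=0$.

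The principal obstacle will be the 2D flat-metrization step. Justifying local existence of $h$ amounts to solving the parallel-transport system $\nabla h=0$ for a non-degenerate symmetric $(0,2)$-tensor on the $(t,r)$-plane; its integrability is precisely the flatness of the 2D connection coded by $[\delta_{t},\delta_{r}]=0$, and the freedom to prescribe the initial value of $h$ at a chosen point as any non-degenerate symmetric bilinear form gives the signature flexibility needed to make $\mathbf{A}$ Lorentzian (or of whichever signature the application demands). With $h$ in hand, everything else is a direct Christoffel-symbol computation.
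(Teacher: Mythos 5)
Your proposal is correct and follows essentially the same route as the paper: the vanishing of $a_{1},\dots,a_{13}$ together with the closure of the iterated brackets on $[\delta_{\theta},\delta_{\varphi}]$ gives $\dim\mathcal{V}(\mathcal{D}_{\mathcal{H}})=1$, and the metric $\mathbf{A}$ is obtained exactly as in (\ref{class_4_Riemann}) by flat-metrizing the two-dimensional $(t,r)$-connection and adjoining the round-sphere block. Your extra justification of the two-dimensional step via parallel transport of an arbitrary non-degenerate initial form is a welcome filling-in of a detail the paper leaves implicit, but it does not change the argument.
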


\bigskip

\textbf{A concrete example. } Take $k_{1}=...=k_{10}=0$ in the given chart
and $h_{tt}=1,$ $h_{rr}=-1,$ $h_{tr}=0.$ Then, the Lorentzian metric%
\begin{equation}
\mathbf{A}:=\dot{t}^{2}-\dot{r}^{2}-w^{2}
\end{equation}%
has its Levi-Civita connection $\tilde{\Gamma}$ with the only nonzero
coefficients:%
\begin{equation}
\tilde{\Gamma}_{~\phi \phi }^{\theta }=-\sin \theta \cos \theta ,~\ \ \tilde{%
\Gamma}_{~\theta \phi }^{\phi }=\cot \theta ,
\end{equation}%
meaning that it metrizes the given connection.

\subsection{Class 5:\ $\left[ \protect\delta _{t},\protect\delta _{r}\right]
\not=0,$ $k_{7}=k_{8}=k_{9}=k_{10}=0$}

\begin{lemma}
\label{lem:Class_5} Assume $L$ is a nontrivially Finslerian metric belonging
to Class 5; in particular: $a_{1}a_{4}-a_{2}a_{3}\not=0.$ Then:

\begin{enumerate}
\item There hold the relations:%
\begin{equation}
a_{5}=0,~\ \ R_{rt}-R_{tr}=a_{1}+a_{4}.
\end{equation}%
In particular, the symmetry of the connection Ricci tensor is equivalent to $%
a_{1}+a_{4}=0.$

\item If $a_{1}+a_{4}=0,$ then $L$ is affinely equivalent to any of the
pseudo-Riemannian metrics:%
\begin{equation}
\mathbf{A}=\mathcal{C}_{1}e^{-2\varphi }\left\vert -a_{3}\dot{t}^{2}+2a_{1}%
\dot{t}\dot{r}+a_{2}\dot{r}^{2}\right\vert +\mathcal{C}_{2}w^{2},
\label{A_Class_5}
\end{equation}%
where $\mathcal{C}_{1},\mathcal{C}_{2}\in \mathbb{R}\backslash \{0\}$ are
constants.

\item The canonical connection of $L$ always has $\dim \mathcal{V}\left( 
\mathcal{D}_{\mathcal{H}}\right) \leq 2.$
\end{enumerate}
\end{lemma}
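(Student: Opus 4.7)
The strategy is to dispatch items 1 and 3 quickly using results already available in the paper, then devote the bulk of the work to item 2, where the key observation is a hidden quadratic structure inside the general Class 5 formula.

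For item 1, I use that Class 5 is defined inside Case II, so $k_7=k_8=k_9=k_{10}=0$. The computation in the proof of Lemma \ref{lem:theta_t} shows this forces $a_6=a_7=\cdots=a_{13}=0$; in particular $a_9=a_{12}=0$, and the identity (\ref{eq:a5}) gives $a_5=0$. Substituting this into the formula of Lemma \ref{lem:symmetric_Ric} yields $R_{rt}-R_{tr}=a_1+a_4$, so the Ricci tensor is symmetric iff $a_1+a_4=0$.

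For item 3, the Class 5 classification formula $L=w^{2}\xi(q)$, with $\xi$ an \emph{arbitrary} smooth function of one variable, parametrizes an infinite family of Berwald--Finsler metrizations of the \emph{same} canonical connection $\nabla$. Picking any two choices of $\xi$ whose ratio is not constant gives two non-proportional Finsler functions metrizing $\nabla$. By the contrapositive of Lemma \ref{lem:maximal_dim}, the equality $\dim \mathcal{V}(\mathcal{D}_{\mathcal{H}})=3$ would force proportionality; hence $\dim \mathcal{V}(\mathcal{D}_{\mathcal{H}})\leq 2$.

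Item 2 is the core of the argument. Assuming $a_1+a_4=0$, I first rewrite the integrand in the definition (\ref{I_p}) of $I$: with $a_4=-a_1$ the denominator becomes $a_2 p^{2}+2a_1 p-a_3$, whose $p$-derivative is exactly twice the numerator $a_1+a_2 p$. Therefore
\begin{equation*}
I(p)=\tfrac{1}{2}\log\bigl|a_2 p^{2}+2a_1 p-a_3\bigr|,
\end{equation*}
up to an additive constant, which can be absorbed into $\varphi$. Consequently $\dot t^{\,2} e^{2I(p)}=\bigl|-a_3\dot t^{\,2}+2a_1\dot t\dot r+a_2\dot r^{\,2}\bigr|$. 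Substituting the quadratic specialization $\xi(q)=\mathcal{C}_1 q^{2}+\mathcal{C}_2$ into the general Class 5 formula $L=w^{2}\xi(q)$ with $q=\dot t\,e^{I(p)-\varphi}/w$ collapses the $w$-factors and produces precisely the expression $\mathbf{A}$ of (\ref{A_Class_5}). Since every function of the form $w^{2}\xi(q)$ is by construction annihilated by the horizontal vector fields $\delta_a$, so is this quadratic specialization; hence $\mathbf{A}$ Riemann-metrizes $\nabla$. The nondegeneracy of $\mathbf{A}$ reduces to the $2\times 2$ $(\dot t,\dot r)$-block with determinant $-a_1^{2}-a_2 a_3=a_1 a_4-a_2 a_3$, nonzero by the standing Class 5 hypothesis.

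The main obstacle is locating the correct quadratic form inside $L$: the identification of $\xi(q)=\mathcal{C}_1 q^{2}+\mathcal{C}_2$ as the Riemannian choice works only because $a_1+a_4=0$ turns the \emph{a priori} transcendental factor $e^{2I(p)}$ into a quadratic polynomial in $p$. Without this identity $e^{2I}$ is genuinely non-algebraic, and no such quadratic collapse is possible, which geometrically explains why $a_1+a_4=0$ is not only necessary but also sufficient for Riemann metrizability within Class 5.
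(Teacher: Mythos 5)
Your proof is correct and follows essentially the same route as the paper's: $a_{5}=0$ from $k_{8}=k_{9}=0$ (you reach it via $a_{9}=a_{12}=0$ and the identity (\ref{eq:a5}), the paper directly via $a_{5}=k_{8,r}-k_{9,t}$), then the quadratic specialization $\xi(q)=\mathcal{C}_{1}q^{2}+\mathcal{C}_{2}$ together with the explicit integration of (\ref{I_p}) under $a_{4}=-a_{1}$, and the same determinant $\propto -a_{1}^{2}-a_{2}a_{3}=a_{1}a_{4}-a_{2}a_{3}\neq 0$ for nondegeneracy. Your item-3 argument (two non-proportional choices of $\xi$ plus the contrapositive of Lemma \ref{lem:maximal_dim}) is in fact a slightly cleaner invocation than the paper's appeal to Remark \ref{remark:nec_cond}, which literally presupposes a Riemannian metrization; the only point to add is a word on why a second, non-proportional choice of $\xi$ can be taken to still give a \emph{nondegenerate} $L$ (e.g., a small perturbation of the given $\xi$, nondegeneracy being an open condition).
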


\begin{proof}
\begin{enumerate}
\item The statement follows immediately from $a_{5}=k_{8,r}-k_{9,t}$ and
from Lemma \ref{lem:symmetric_Ric}.

\item Finsler metrics in this class are given, see Section \ref%
{sec:classif_Berwald}, by:%
\begin{equation}
L=w^{2}\xi \left( q\right) ,~\ \ q=\dfrac{\dot{t}}{w}e^{I\left( t,r,p\right)
-\varphi },~\ p=\frac{\dot{r}}{\dot{t}},\   \label{L_class_5}
\end{equation}%
where $\xi $ is a free function and $\varphi =\varphi \left( t,r\right) $.
Let us find a concrete $\xi $ which leads to quadratic functions in $\dot{t},%
\dot{r},\dot{\theta},\dot{\phi}.$

Since $w=(\dot{\theta}^{2}+\dot{\phi}^{2}\sin ^{2}\theta )^{1/2}$ is given
by a square root and $\dot{t},p,\varphi $ are independent of $w$, the only
chance to obtain a quadratic function in $\dot{\theta},\dot{\phi}$ is to
pick:%
\begin{equation}
\xi \left( q\right) =\mathcal{C}_{1}q^{2}+\mathcal{C}_{2},
\end{equation}%
where $\mathcal{C}_{1}$ and $\mathcal{C}_{2}$ are constants, that is: 
\begin{equation}
\mathbf{A}=\mathcal{C}_{1}e^{-2\varphi }\dot{t}^{2}e^{2I\left( t,r,p\right)
}+\mathcal{C}_{2}w^{2}.  \label{eq:A_rough_Class5}
\end{equation}%
The above expression $\mathbf{A}$ is also quadratic in $\dot{t}$ and $\dot{r}
$ if and only if $e^{2I\left( t,r,p\right) }$ is the ratio between a
quadratic expression in $\dot{t},\dot{r}$ and $\dot{t}^{2}:$%
\begin{equation}
e^{2I\left( t,r,p\right) }=\dfrac{\alpha \dot{t}^{2}+\beta \dot{t}\dot{r}%
+\gamma \dot{r}^{2}}{\dot{t}^{2}}=\alpha +\beta p+\gamma p^{2}
\end{equation}%
for some functions $\alpha ,\beta $ and $\gamma $ of $\left( t,r\right) ,$
Taking a logarithm in the above, we get:%
\begin{equation}
I=\dfrac{1}{2}\ln \left\vert \alpha +\beta p+\gamma p^{2}\right\vert ,
\end{equation}%
which, by virtue of (\ref{I_p}) and of the hypothesis $a_{4}=-a_{1}$, gives:%
\begin{equation}
\dfrac{\partial I}{\partial p}=\dfrac{1}{2}\dfrac{2\gamma p+\beta }{\alpha
+\beta p+\gamma p^{2}}=\dfrac{a_{2}p+a_{1}}{a_{2}p^{2}+2a_{1}p-a_{3}}.
\end{equation}%
This is identically solved by: $\alpha =-\dfrac{a_{3}}{2},~\ \ \beta
=a_{1},~\ \ \gamma =\dfrac{a_{2}}{2},$ that is:%
\begin{equation}
I=\dfrac{1}{2}\ln \left\vert -a_{3}+2a_{1}p+a_{2}p^{2}\right\vert ,
\end{equation}%
which, substituted into (\ref{eq:A_rough_Class5}), leads to:%
\begin{equation}
\mathbf{A}=\mathcal{C}_{1}e^{-2\varphi }\left\vert -a_{3}\dot{t}^{2}+2a_{1}%
\dot{t}\dot{r}+a_{2}\dot{r}^{2}\right\vert +\mathcal{C}_{2}w^{2}
\end{equation}%
metrizes the given connection. A quick check shows that its associated
metric tensor has the determinant: 
\begin{equation*}
\mathcal{C}_{1}^{2}\mathcal{C}_{2}^{2}e^{-4\varphi
}(a_{1}^{2}+a_{2}a_{3})\sin ^{2}\theta .
\end{equation*}%
The hypothesis $0\not=a_{1}a_{4}-a_{2}a_{3}=-a_{1}^{2}-a_{2}a_{3}$ ensures
that the bracket above is nonzero, that is, $\mathbf{A}$ is a well defined
Riemannian metric for any $\mathcal{C}_{1}\neq 0,\quad \mathcal{C}%
_{2}\not=0. $

\item The statement follows again, from Remark \ref{remark:nec_cond}.
\end{enumerate}
\end{proof}

We note that $k_{7}=k_{8}=k_{9}=k_{10}$ implies: $a_{6}=...=a_{13}=0,$ $%
a_{14}=1,$ that is:%
\begin{equation}
\left[ \delta _{t},\delta _{\theta }\right] =\left[ \delta _{r},\delta
_{\theta }\right] =\left[ \delta _{t},\delta _{\varphi }\right] =\left[
\delta _{r},\delta _{\varphi }\right] =0.
\end{equation}%
This means that $\mathcal{V}\left( \mathcal{D}_{\mathcal{H}}\right) $ is, at
each point, generated by $\left[ \delta _{t},\delta _{r}\right] $ and $\left[
\delta _{\theta },\delta _{\phi }\right] ,$ which are easily seen to be
linearly independent and to commute. Another interesting remark is that, for 
$a_{1}+a_{4}\not=0$, the respective Finsler metrics have a non-symmetric
connection Ricci tensor, though $\dim \mathcal{V}\left( \mathcal{D}_{%
\mathcal{H}}\right) <3$.

\section{Conclusion}

In the present paper, we have answered the question on which $SO\left(
3\right) $-symmetric, nontrivially Finslerian Berwald 4-dimensional
structures admit/do not admit, an affinely equivalent pseudo-Riemannian one.
In the affirmative case, we have explicitly constructed the respective
pseudo-Riemannian metric. The situation is summarized in the table below,
where $\delta _{w}=0$ (respectively, $\delta _{w}\not=0$) is a shorthand
writing for: $k_{7}=k_{8}=k_{9}=k_{10}=0$ (respectively, not all of $%
k_{7},k_{8},k_{9},k_{10}$ are zero):

\begin{center}
\begin{tabular}{|l|l|l|l|}
\hline
\textbf{Cases} & \textbf{Defining properties} & $\dim \mathcal{V}\left( 
\mathcal{D}_{\mathcal{H}}\right) $ & \textbf{Riemann equiv.} \\ \hline
Class 1: Power law & $\left[ \delta _{t},\delta _{r}\right] \not=0,$ $%
D\not=0,~\delta _{w}\not=0$ & $3$ & no \\ \hline
Class 2: Exp. law & $\left[ \delta _{t},\delta _{r}\right] \not=0,$ $%
D=0,~E\not=0,\delta _{w}\not=0$ & $3$ & no \\ \hline
Class 3: & $\left[ \delta _{t},\delta _{r}\right] =0,~\ \delta _{w}\not=0$ & 
$\leq 2$ & yes \\ \hline
Class 4: & $\left[ \delta _{t},\delta _{r}\right] =0,~\ \delta _{w}=0$ & $1$
& yes \\ \hline
Class 5: & $\left[ \delta _{t},\delta _{r}\right] \not=0,~\ \delta _{w}=0$ & 
$\leq 2$ & iff Ricci=symmetric \\ \hline
\end{tabular}
\end{center}

The question is relevant for physics in Lorentzian signature, where $%
SO\left( 3\right) $-symmetric Berwald metrics are candidates for models of
the gravitational field around massive bodies such as neutron stars or black
holes. Berwald-Finsler spacetimes that are \textit{not} affinely Riemannian
will produce geodesics and geodesic deviations that cannot be ascribed to
any Riemannian metric, thus, predicting specific Finslerian effects - which,
in principle, should be detectable. The latter will be used in a future
paper, as ansatzes in a Finslerian generalization of the Einstein field
equation, \cite{kinetic gas}, \cite{math-foundations}, to determine the
gravitational field of a spherically symmetric kinetic gas.

Work in progress is the determination of necessary and sufficient conditions
for the existence of an affinely equivalent pseudo-Riemannian metric, for
general Berwald-type pseudo-Finsler spaces.

\bigskip

\section{Appendix: Connection coefficients and curvature components}

\label{app:defs} The most general $SO(3)$-invariant, symmetric affine
connection is given, in 4 dimensions, by the coefficients, \cite{Voicu}:%
\begin{align}
\Gamma _{tt}^{t}& =k_{1}(t,r), & \Gamma _{tr}^{t}& =k_{2}(t,r),  \notag \\
\Gamma _{rr}^{t}& =k_{3}(t,r), & \Gamma _{tt}^{r}& =k_{4}(t,r),  \notag \\
\Gamma _{rr}^{r}& =k_{5}(t,r), & \Gamma _{tr}^{r}& =k_{6}(t,r),  \notag \\
\Gamma _{\theta \theta }^{t}& =\tfrac{\Gamma _{\phi \phi }^{t}}{\sin
^{2}\theta }=k_{7}(t,r), & \Gamma _{\phi t}^{\phi }& =\Gamma _{\theta
t}^{\theta }=k_{8}(t,r),  \notag \\
\Gamma _{\phi r}^{\phi }& =\Gamma _{\theta r}^{\theta }=k_{9}(t,r), & \Gamma
_{\theta \theta }^{r}& =\tfrac{\Gamma _{\phi \phi }^{r}}{\sin ^{2}\theta }%
=k_{10}(t,r),  \notag \\
\sin \theta \Gamma _{t\theta }^{\phi }& =-\tfrac{\Gamma _{\phi t}^{\theta }}{%
\sin \theta }=k_{11}(t,r), & \Gamma _{\phi \phi }^{\theta }& =-\sin \theta
\cos \theta  \notag \\
\sin \theta \Gamma _{r\theta }^{\phi }& =-\tfrac{\Gamma _{r\phi }^{\theta }}{%
\sin \theta }=k_{12}(t,r), & \Gamma _{\theta \phi }^{\phi }& =\Gamma _{\phi
\theta }^{\phi }=\cot {\theta }\,.  \label{eq:appcon}
\end{align}

The associated spray $S$ of the above $SO(3)$-invariant symmetric affine
connection is given by 
\begin{equation}  \label{Eq:Geodesic_S}
S=\dot{t}\partial+\dot{r}\partial_r+\dot{\theta}\partial_\theta+\dot{\phi}%
\partial_\phi-2G^t\dot{\partial}_t-2G^r\dot{\partial}_r-2G^\theta\dot{%
\partial}_\theta-2G^\phi\dot{\partial}_\phi,
\end{equation}
where the coefficients $G^i$ are given by 
\begin{align*}
G^t=&\frac{1}{2}\left( k_1\dot{t}^2+2k_2 \dot{t}\dot{r}+k_3 \dot{r}^2+k_7%
\dot{\theta}^2+k_7\dot{\phi}^2 \sin^2\theta \right), \\
G^r=&\frac{1}{2}\left( k_4\dot{t}^2+2k_6 \dot{t}\dot{r}+k_5 \dot{r}^2+k_{10}%
\dot{\theta}^2+k_{10}\dot{\phi}^2 \sin^2\theta\right), \\
G^\theta=&\frac{1}{2}\left( 2k_8 \dot{t}\dot{\theta}+2k_{9}\dot{\theta}\dot{r%
}-\dot{\phi}(2k_{11}\dot{t}+2k_{12}\dot{r}+\dot{\phi} \cos\theta)\sin\theta
\right) , \\
G^\phi=&\frac{1}{2}\left( 2k_8 \dot{t}\dot{\phi}+2k_{9}\dot{\phi}\dot{r}+%
\dot{\phi}\dot{\theta}\cot\theta \right)+\frac{\dot{\theta}}{\sin\theta}%
(k_{11}\dot{t}+k_{12}\dot{r}).
\end{align*}

The curvature tensor $R^{c}{}_{ab}\dot{\partial}_{c}=\left[ \delta
_{a},\delta _{b}\right] $ of the induced nonlinear (Ehresmann) connection on 
$TM$ given by the coefficients $N^{a}{}_{b}=\Gamma _{bc}^{a}\dot{x}^{c}$,
can be locally expressed as 
\begin{equation*}
R^{a}{}_{bc}=\delta _{c}N^{a}{}_{b}-\delta _{b}N^{a}{}_{c}=\partial
_{c}N^{a}{}_{b}-N^{d}{}_{c}\dot{\partial}_{d}N^{a}{}_{b}-\partial
_{b}N^{a}{}_{c}+N^{d}{}_{b}\dot{\partial}_{d}N^{a}{}_{c}\,.
\end{equation*}%
where: 
\begin{equation}
\begin{array}{llll}
R_{~tr}^{t}=a_{1}\dot{t}+a_{2}\dot{r} & R_{~tr}^{r}=a_{3}\dot{t}+a_{4}\dot{r}
& R_{~tr}^{\theta }=a_{5}\dot{\theta} & R_{~tr}^{\varphi }=a_{5}\dot{\varphi}
\\ 
R_{~t\theta }^{t}=a_{6}\dot{\theta} & R_{~t\theta }^{r}=a_{7}\dot{\theta} & 
R_{t\theta }^{\theta }=a_{8}\dot{t}+a_{9}\dot{r} & R_{t\theta }^{\varphi }=0
\\ 
R_{~t\varphi }^{t}=a_{6}\dot{\varphi}\sin ^{2}\theta & R_{~t\varphi
}^{r}=a_{7}\dot{\varphi}\sin ^{2}\theta & R_{~t\varphi }^{\theta }=0 & 
R_{t\varphi }^{\varphi }=a_{8}\dot{t}+a_{9}\dot{r} \\ 
R_{~r\theta }^{t}=a_{10}\dot{\theta} & R_{~r\theta }^{r}=a_{11}\dot{\theta}
& R_{~r\theta }^{\theta }=a_{12}\dot{t}+a_{13}\dot{r} & R_{~r\theta
}^{\varphi }=0 \\ 
R_{~r\varphi }^{t}=a_{10}\dot{\varphi}\sin ^{2}\theta & R_{~r\varphi
}^{r}=a_{11}\dot{\varphi}\sin ^{2}\theta & R_{~r\varphi }^{\theta }=0 & 
R_{~r\varphi }^{\varphi }=a_{12}\dot{t}+a_{13}\dot{r} \\ 
R_{~\theta \varphi }^{t}=0 & R_{~\theta \varphi }^{r}=0 & R_{~\theta \varphi
}^{\theta }=-a_{14}\dot{\varphi}\sin ^{2}\theta & R_{~\theta \varphi
}^{\varphi }=a_{14}\dot{\theta}\,.%
\end{array}
\label{eq:appcurv}
\end{equation}%
The coefficients $a_{i}$ above are functions of $t$ and $r$, as follows: 
\begin{equation*}
\begin{split}
a_{1}& =k_{1,r}-k_{2,t}+k_{3}k_{4}-k_{2}k_{6}\,, \\
a_{2}& =k_{2,r}-k_{3,t}+k_{2}^{2}+k_{3}k_{6}-k_{1}k_{3}-k_{2}k_{5}\,, \\
a_{3}& =k_{4,r}-k_{6,t}+k_{1}k_{6}+k_{4}k_{5}-k_{2}k_{4}-k_{6}^{2}\,, \\
a_{4}& =k_{6,r}-k_{5,t}+k_{2}k_{6}-k_{3}k_{4}\,, \\
a_{5}& =k_{8,r}-k_{9,t}\,, \\
a_{6}& =-k_{7,t}+k_{7}k_{8}-k_{1}k_{7}-k_{2}k_{10}\,, \\
a_{7}& =-k_{10,t}+k_{8}k_{10}-k_{4}k_{7}-k_{6}k_{10}\,, \\
a_{8}& =-k_{8,t}+k_{1}k_{8}+k_{4}k_{9}-k_{8}^{2}\,, \\
a_{9}& =-k_{9,t}+k_{2}k_{8}+k_{6}k_{9}-k_{8}k_{9}\,, \\
a_{10}& =-k_{7,r}+k_{7}k_{9}-k_{2}k_{7}-k_{3}k_{10}\,, \\
a_{11}& =-k_{10,r}+k_{9}k_{10}-k_{6}k_{7}-k_{5}k_{10}\,, \\
a_{12}& =-k_{8,r}+k_{2}k_{8}+k_{6}k_{9}-k_{8}k_{9}\,, \\
a_{13}& =-k_{9,r}+k_{3}k_{8}+k_{5}k_{9}-k_{9}^{2}\,, \\
a_{14}& =1+k_{7}k_{8}+k_{9}k_{10}\,,
\end{split}%
\end{equation*}%
where the subscripts $_{,t}$ and $_{,r}$ means partial differentiation with
respect to $t$ and $r$ respectively.

The curvature components $R^{a}{}_{bcd}$ of the affine connection can be
obtained by $\dot{x}$-differentiation from $R_{~cd}^{a}$, as: 
\begin{equation}
R_{b~cd}^{~a}=\dot{\partial}_{b}R_{~cd}^{a};
\end{equation}%
e.g., $a_{1}=R_{t~tr}^{~t}$ etc.

\end{document}